\documentclass[10pt]{article}
\newcommand{\documentdate}{\today}

\usepackage{a4wide,latexsym,amsmath,varioref,graphicx,xcolor,amssymb,diagbox,comment}
\usepackage{pifont}
\usepackage{tikz}
\usepackage{pgfplots}
\usepackage{multirow}
\usepackage{booktabs}
\usepackage{makecell,tablefootnote}
\usepackage{color}
\definecolor{brightpink}{rgb}{1.0, 0.0, 0.5}

\title{bAdag: an adaptive block coordinate gradient method for smooth nonconvex functions }

\author{ Giovanni Seraghiti\thanks{University of Mons, Rue de Houdain 9, 7000 Mons, Belgium. Email: {\tt giovanni.seraghiti@umons.ac.be}}
 \thanks{Dipartimento di Ingegneria Industriale, Universit\`a degli Studi di Firenze, Viale Morgagni 40/44, 50134 Firenze, Italia. GS acknowledge the support by the European Union (ERC consolidator, eLinoR, no 101085607). The work of GS\ was partially supported by INdAM-GNCS through Progetti di Ricerca.}
}

\newcommand{\beqn}[1]{\begin{equation}\label{#1}}
\newcommand{\eeqn}{\end{equation}}
\newcommand{\req}[1]{(\ref{#1})}

\newtheorem{theorem}{Theorem}[section]
\newtheorem{lemma}[theorem]{Lemma}

\newtheorem{corollary}[theorem]{Corollary}

\newcounter{algo}[section]
\renewcommand{\thealgo}{\thesection.\arabic{algo}}
\newcommand{\llem}[2]{\vspace{\baselineskip} 
\noindent\framebox[\textwidth]{\parbox{0.95\textwidth}{
\begin{lemma} \label{#1} \rm #2 \end{lemma} } } \vspace{\baselineskip} }

\newcommand{\algo}[3]{\refstepcounter{algo}
\begin{center}\begin{figure}[htbp]
\framebox[\textwidth]{
\parbox{0.95\textwidth} {\vspace{\topsep}
{\bf Algorithm \thealgo : #2}\label{#1}\\
\vspace*{-\topsep} \mbox{ }\\
{#3} \vspace{\topsep} }}
\end{figure}\end{center}}
\newcommand{\bpr}{{\bf Proof.} \hspace{1.5mm}}
\newcommand{\epr}{\hfill $\Box$ \vspace*{1em}}
\newcommand{\proof}[1]{
\begin{list}{}{
\setlength{\topsep}{0.0pt}
\setlength{\partopsep}{0.0pt}
\setlength{\leftmargin}{0.025\textwidth}
\setlength{\rightmargin}{0.5\leftmargin}
\setlength{\labelwidth}{0.5\leftmargin}
\setlength{\labelsep}{0.25\leftmargin}}
\item \bpr #1 \epr \noindent
\end{list}}
\newcommand{\lthm}[2]{\vspace{\baselineskip} 
\noindent\framebox[\textwidth]{\parbox{0.95\textwidth}{
\begin{theorem} \label{#1} \rm #2 \end{theorem} } } \vspace{\baselineskip} }

\newcommand{\iiz}[1]{\{ 0, \ldots, #1 \}}

\newcommand{\calA}{{\cal A}} 
\newcommand{\calD}{{\cal D}}

\newcommand{\calR}{{\cal R}} 
\newcommand{\calO}{{\cal O}}

\newcommand{\E}{\mathbb{E}}
\renewcommand{\Re}{\hbox{I\hskip -2pt R}}

\newcommand{\bigfrac}[2]{\frac{\displaystyle #1}{\displaystyle #2}}

\newcommand{\eqdef}{\stackrel{\rm def}{=}}

\newcommand{\tal}[1]{{\normalsize {\sf #1}}}

\newcommand{\flow}{f_{\rm low}}

\DeclareMathOperator*{\average}{average}
\newcommand{\sign}{\text{sign}}
\newcommand{\argmax}{\text{argmax}}

\newcommand{\name}{\text{\sf bAdag}}
\newcommand{\ename}{\text{\sf ebAdag}}
\newcommand{\namec}{\text{box-constrained \sf bAdag}}
\newcommand{\names}{\text{\sf prunAdag}}

\definecolor{greeng}{rgb}{0.0, 0.5, 0.4}
\newcommand{\gs}[1]{{\color{greeng} (\textbf{GS:} #1)}}
\newcommand{\cmark}{\ding{51}} 
\newcommand{\xmark}{\ding{55}} 
\date{\documentdate}

\begin{document}

\maketitle

\begin{abstract}
A new Block Coordinate Gradient (BCG) method, dubbed \name, for smooth, nonconvex minimization problem is proposed; it falls in the class of Objective Function Free Optimization (OFFO) methods, and it is based on the AdaGrad algorithm. At each iteration, our method computes an adaptive step size based on the cumulative sum of block gradients, instead of full gradients as in AdaGrad-type methods. We prove ergodic, sublinear convergence rates for the \name\ algorithm when minimizing a smooth, possibly nonconvex objective under the (block) Lipschitz continuity assumption on the gradient. Our theory covers three widely popular block selection strategies: the Cyclic (C) rule, Uniform Random selection (UR), and the greedy Gauss-Southwell (GS) rule. We also extend our algorithm and its convergence theory to box-constrained smooth functions. We validate the proposed algorithms through synthetic and real-world experiments.
\end{abstract}

{\small
\textbf{Keywords:} block coordinate gradient, adaptive gradient, convergence rates, nonconvex optimization
}

\section{Introduction}

We are interested in solving the following unconstrained optimization problem
\begin{equation}
\min_{x \in \mathbb{R}^N} f(x),
    \label{eq:problem}
\end{equation}
with variables $x = (x_1, \dots, x_N) \in \mathbb{R}^N$, and where $f$ is a continuously differentiable, possibly nonconvex function. 
In large-scale optimization, a common iterative approach is Block Coordinate Descent (BCD), where, at iteration $k$, the objective function is minimized with respect to a subset of variables $\{x_i\}_{i \in b_{\ell_k}}$ and $b_{\ell_k} \subseteq \{1\dots N\}$, while the others remain fixed. Among the BCD schemes, a widely-used approach in smooth settings is the Block Coordinate Gradient (BCG) method, which uses the following update:
\begin{equation}
    x^{k+1}=x^k-\alpha_k g_k^{(\ell_k)},
    \label{eq:block_grad_step}
\end{equation}
where $x_k$ is the $k$-th iterate, $g_k\eqdef \nabla f(x_k)$ denotes the gradient of $f$ at $x_k$, and $g_k^{(\ell_k)}$ is the vector containing the partial derivatives corresponding to the indices in $b_{\ell_k}$ and zero elsewhere, and $\alpha_k$ is the step size. Due to its simplicity, this approach has been applied in a variety of contexts, especially in applications where the cost of one iteration of gradient descent is at least as expensive as updating all the components in the BCG scheme. Such objective functions are widely popular and include quadratic functions and logistic loss~\cite{nesterov2012efficiency,Nutini_faster,peng2016coordinate}. 

BCG methods are often classified based on the criteria for selecting the block to optimize at each iteration. Among the widely-known criteria, we identify three main categories:
\begin{enumerate}
    \item greedy strategies, such as the \textit{Gauss-Southwell} (GS) rule, which selects the block corresponding to the largest gradient norm;
    \item random strategies, where the block is chosen according to some probability distribution, for example, \textit{Uniformly at Random} (UR); 
    \item \textit{Cyclic} (C) rules, updating the blocks of variables cyclically.
\end{enumerate}  We further discuss the different types of BCG methods and their convergence in Section~\ref{sec:rel_work}. 

While the criteria for selecting the block to optimize at each iteration have been extensively investigated, surprisingly, much less effort has been devoted to the choice of the step $\alpha_k$ and how it affects the convergence of the BCG method. To the best of our knowledge, the majority of the contributions on BCG either consider constant step sizes or employ line-search strategies, which require several function evaluations at each iteration. However, gradient methods directly evaluating the objective function exhibit slow convergence when the function is expensive to compute or suffer from poor accuracy in noisy scenarios. A possible alternative are Objective Function Free Optimization (OFFO) methods that never evaluate the objective function. One of the reasons of their popularity is that, when dealing with noisy functions, they are more robust than gradient methods requiring function evaluations~\cite{gratton2024complexity}. Adaptive gradient methods are relevant examples of OFFO algorithms that rely on the history of the past gradient values to compute the step size, or the step direction, or both at every iteration. Prominent examples are AdaGrad~\cite{duchi2011adaptive,mcmahan2010adaptive}, Adam~\cite{kingma2014adam}, RMSprop~\cite{tieleman2012lecture}, ADADELTA~\cite{zeiler2012adadelta}, Shampoo~\cite{gupta2018shampoo}, and Muon~\cite{jordan2024muon}, with a unified framework covering several of the mentioned algorithms recently proposed in~\cite{gratton2026unified}. Adaptive gradient methods represent state-of-the-art algorithms to solve large machine learning problems where the cost of computing the objective function is sometimes prohibitive. Among these methods, AdaGrad has gain particular attention due to its flexibility and the simplicity arguments used in its convergence theory. Despite the popularity of adaptive gradient methods in standard first-order optimization, very few works apply adaptive step size selection strategies in the context of BCG, and the convergence rate of such algorithms is still unexplored~\cite{liu2023improved}. 

\paragraph*{Outline and Contributions}

We discuss relevant contributions on BCG in Section~\ref{sec:rel_work}; we describe the \name\ framework in Section~\ref{sec:algo} and  its convergence rate analysis in Section~\ref{sec:convergence}. In particular, we prove a general nonmonotone descent lemma  (Lemma~\ref{lemma:dec_bAdaGrad}), and we derive a unified convergence rate analysis  (Theorem~\ref{theorem:bAdag_GS}) that covers GS, UR, and C block selection strategies. In Section~\ref{sec:constrained}, we extend our algorithm and prove new convergence rates for box-constrained problems. Finally, we show exhaustive numerical results both on convex and nonconvex problems in Section~\ref{sec:exp}.

Our main contributions can be summarized as follows:
\begin{enumerate}
    \item We introduce \name: a novel adaptive BCG method inspired by AdaGrad. To our knowledge, \name\ is the first adaptive BCG algorithm with provable nonasymptotic convergence rates in the nonconvex setting covering GS, UR, and C block selection rules, while they are usually analyzed separately in the literature.
    \item We establish an ergodic sublinear convergence rate for the \name\ algorithm by proving that, under the GS and UR selection rules, the average gradient norm and its expectation respectively, decrease at a rate of $\calO\left(\frac{1}{\sqrt{k+1}}\right)$, where $k$ denotes the iteration index. Furthermore, for the cyclic variant of \name, we derive a sublinear convergence rate of $\calO\left(\frac{1}{\sqrt{T+1}}\right)$, where $T$ represents the complete cycle in which all blocks of variables are updated once.
    \item We extend the \name\ algorithm to box-constrained minimization and we establish similar rates to the unconstrained case. 
\end{enumerate} 
Finally, we illustrate our findings with exhaustive numerical results on several applications, including least square and logistic regression, and robust Nonnegative Matrix Factorization (NMF).

\paragraph*{Notation}   
We denote $v_{i,k}$ the $i$-th component of a vector $v_k\in \Re^n$. The gradient of $f$ is denoted as $g\eqdef\nabla f$; thus, at the $k$-th iteration, the gradient evaluated at the current iterate $x_k$ is $g_k = g(x_k)$ and its $i$-th component is $g_{i,k}$. We introduce a partition of the variable into disjoint blocks denoted 
$
\{1,\dots,N\} = \bigcup_{\ell=1}^d b_\ell,
$
where $b_\ell \subseteq \{1,\dots,N\}$, $b_\ell \cap b_s = \emptyset$ for $\ell \neq s$.
We denote  $g_{i,k}^{(\ell)}$ the block gradient containing only the components in $b_\ell$ and being zero elsewhere, that is, 
\begin{equation}
    g_{i,k}^{(\ell)}=\left\{\begin{array}{cc}
    g_{i,k} & i \in b_{\ell} \\
    0 & \text{otherwise.}
\end{array}\right.  
\label{gOxD-def}
\end{equation}
 Unless specified otherwise, $\|\cdot\|$ is the Euclidean norm on $\Re^n$. Since in this paper we deal with nonsynmptotic rates, given two sequences $\{\alpha_k\}$  and $\{\beta_k\}$ of non-negative reals, we write with a slight abuse of notation, $\alpha_k =\calO(\beta_k)$ if there exists a finite constant $C$ such that $\alpha_k \leq C \beta_k $ for every $k\geq 1$.

\section{Related works}
\label{sec:rel_work}
Despite that BCD approaches have long been known, they have recently regained interest in the study of large-scale problems. They can be classified into two categories: exact and inexact algorithms. The former is characterized by solving each block subproblem to global optimality and convergence results often rely on convexity or relaxed per-block convexity assumptions on the objective function~\cite{auslender1976optimisation,bertsekas1997nonlinear,grippo2000convergence}. While in the latter, the block subproblems are solved approximately; thus, the BCG method is an example of inexact scheme.

The convergence properties of BCG have been extensively studied in the convex setting, and the corresponding theory is by now well established, see~\cite{wright2015coordinate,shi2016primer} for review articles on the topic. Among all possible block selection strategies, randomized BCG has attracted the largest body of contributions due to the simplicity of the arguments employed in its convergence analysis. In particular, for randomized (accelerated) Coordinate Gradient (CG), Nesterov established in~\cite{nesterov2012efficiency} nonasymptotic sublinear convergence rates in the convex case and linear convergence rates in the strongly convex setting. Following the work of Nesterov, sharper rates were derived in~\cite{lee2013efficient,allen2016even,nesterov2017efficiency}, by introducing different sampling techniques. In the nonsmooth setting, a first sublinear rate was established for an $\ell_1$ regularized finite sum minimization problem in~\cite{shalev2009stochastic}. Later, the result of Nesterov was extended in~\cite{richtarik2014iteration} and further refined in~\cite{lu2015complexity} to Block Coordinate Proximal Gradient (BCPG) for the sum of a smooth and a nonsmooth, block-separable, convex function. Subsequent works have established convergence rates for randomized BCG algorithms in specialized convex settings, including parallelizable extensions~\cite{necoara2016parallel} and formulations involving nonseparable objective functions~\cite{aberdam2022accelerated}.

Despite the analysis of cyclic BCG being more involved than that of randomized BCG, Luo and Tseng proved in~\cite{luo1992convergence} an asymptotic linear convergence rate for cyclic CG for strictly convex and twice differentiable functions. Saha and Tewari obtained in~\cite{saha2010finite} the first nonasymptotic result with a sublinear rate in the convex case under an isotonicity assumption on the gradient. Later, Beck and Tetruashvili in~\cite{beck2013convergence} proved sublinear convergence of cyclic BCG for a general smooth and convex function. Their bounds were refined in~\cite{sun2015improved} for the sum of a quadratic and a block-separable, nonsmooth term, and for general convex functions, employing second order information. A similar composite setting was also considered in~\cite{li2018faster}. Other relevant contributions focus on the quadratic case,  see~\cite{lee2019random,sun2021worst,wright2020analyzing}. Moreover, cyclic accelerated BCPG was extended  to the class of Minty variational inequalities with monotone Lipschitz operators, encompassing convex composite optimization as well as convex-concave min-max problems. By introducing a generalized Lipschitz condition with respect to a Mahalanobis norm, the authors achieved the significant improvement of reducing the dependence of the convergence rate on the number of blocks by a square-root factor~\cite{song2023cyclic}. To the best of our knowledge, the first accelerated BCPG method with adaptive step sizes was recently proposed by Wei et al.~\cite{wei2026adaptive}. The authors considered the same class of Minty variational inequalities with monotone Lipschitz operators, where the step sizes are adaptively selected according to local curvature information. Their framework is also parallelizable since it utilizes operator information delayed by a full cycle. Similar to the method proposed in this paper, their algorithm does not require prior knowledge of the Lipschitz constant nor the use of a line-search procedure. Nevertheless, their approach differs substantially from ours: it is not an OFFO algorithms, in fact the local estimates of the Lipschitz constant rely on explicit function evaluations. Moreover, their framework does not cover general nonconvex functions.
 
Greedy selection rules have been shown to have better convergence rates than randomized rule when minimizing smooth strongly convex~\cite{nutini2015coordinate} as well as general convex functions~\cite{stich2017approximate}. On the other hand, the per-iteration computational cost of GS is usually higher than that of random strategies, since it requires the evaluation of the full gradient. Thus, random and cyclic rules are often preferred in practice unless greedy strategies can be cheaply evaluated at every iteration, see~\cite{stich2017approximate,lei2016coordinate,nutini2015coordinate}. Nevertheless, a sublinear rate of convergence for the BCG method with GS-like block selection rules has been derived for convex problems~\cite{tseng2009block,dhillon2011nearest}. Another relevant contribution covering both greedy and cyclic rules is the block successive upper-bound minimization (BSUM) framework, which provides a unified convergence rate analysis for both the BCG and the BCPG methods for convex objectives~\cite{razaviyayn2013unified,hong2017iteration}. Relevant contributions providing nonasymptotic convergence rates for BCG and related algorithms in the convex setting are summarized in Table~\ref{tab:rel_work}. Moreover, several works analyze the more general setting of composite problems of the form $f+h$, where usually $f$ is smooth and $h$ is nonsmooth; thus we include them in the table specifying, in the third column, the additional assumptions for the nonsmooth term $h$. Due to space limitations, the table does not cover specialized settings such as the strongly convex or constrained, which are possibly separately analyzed by some of the contribution. Similarly, since all the contributions also assume (block) Lipschitz continuity of the gradient or generalized variants we omit this information from Table~\ref{tab:rel_work}.
 
\begin{table}[h]
    \centering
    \resizebox{\textwidth}{!}{
   \begin{tabular}{lccccccc}
   \hline
      \multicolumn{6}{c}{ \textbf{Coordinate descent in convex setting with nonasyntotic rates guarantees}} \\  
   \hline
    & Algorithm & Block rule  & \makecell{Nonsmooth \\term}  & \makecell{Adaptive \\ step}  &  \makecell{Additional \\ assumptions} \\
     \hline
    \vspace{0.1cm}
    Nesterov~\cite{nesterov2012efficiency} & CG/ACG & R &  /& no &  /\\
    \hline
    \vspace{0.1cm}
    Tee, Sidford~\cite{lee2013efficient} & ACG & R &  /& no &  \makecell{strongly convex $f$} \\
    \hline
    \vspace{0.1cm}
    Allen-Zhu, al.~\cite{allen2016even} & ACG & R  &  /& no &  / \\
    \hline
    \vspace{0.1cm}
    Nesterov, Stich~\cite{nesterov2017efficiency} & ACG & R  &  /& no &  / \\
    \hline
    \vspace{0.1cm}
    Shalev-Shwartz, Tewari~\cite{shalev2009stochastic} & PCG & R  &  $\lVert \ \cdot  \ \rVert_1$& no & $f$ in a finite-sum form  \\
    \hline
    \vspace{0.1cm}
    Richtárik, Tak\'a\v{c}~\cite{richtarik2014iteration} & BCPG & R  &  separable & no & /  \\
    \hline
    \vspace{0.1cm}
    Lu, Xiao~\cite{lu2015complexity} & BCPG/ABCG & R  &  separable& no & /  \\
     \hline
     \vspace{0.1cm}
    Saha, Tewari~\cite{saha2010finite} & PCG  & C   &  $\lVert \ \cdot  \ \rVert_1$ & no &  isotonicity of $\nabla f$ \\
    \hline
    \vspace{0.1cm}
    Beck, Tetruashvili~\cite{beck2013convergence} & BCG/ABCG & C &  / & no & /  \\
     \hline
    \vspace{0.1cm}
    Sun, Hong~\cite{sun2015improved} & CG  & C &  / & no & twice differentiable $f$ \\
    \hline
    \vspace{0.1cm}
    Sun, Ye~\cite{sun2021worst} & CG & C &  / & no  & quadratic $f$\\
     \hline
    \vspace{0.1cm}
    Lee, Wright~\cite{lee2019random} & CG & C &  / & no &  quadratic $f$\\
    \hline
    \vspace{0.1cm}
    Li et al.~\cite{li2018faster} & BCPG & C &  \makecell{strongly convex} & no & twice differentiable $f$  \\
    \hline
    \vspace{0.1cm}
    Song and Diakonikolas~\cite{song2023cyclic}\footnotemark[1] & ABCPG & C &  separable & no  & /   \\
    \hline
    \vspace{0.1cm}
    Wei et al.~\cite{wei2026adaptive}\footnotemark[1] & ABCPG & C &  separable & yes  & /  \\
    \hline
    \vspace{0.1cm}
     Razaviyayn et al.~\cite{hong2017iteration} & BCPG\footnote{The paper presents a unified framework that covers several BCD algorithm, see the reference for more details.} & C/Gr &  separable & no & / \\
    \hline
     \vspace{0.1cm}
     Tseng, Yun~\cite{tseng2009block} & BCG & Gr &  separable & no & /  \\
    \hline
     \vspace{0.1cm}
      Dhillon et al.~\cite{dhillon2011nearest} & PCG & Gr &  separable & no  & / \\
    \hline
\end{tabular}
}
    \caption{Summary of contributions establishing nonasymptotic convergence rates for BCG and its variant in the convex setting. Notation in the first column: Coordinate Gradient (CG), Accelerated CG (ACG), Proximal CG (PCG), Block CG (BCG), Accelerated BCG (ABCG), Block Coordinate Proximal Gradient (BCPG); in the second column: Random (R), Cyclic (C), Greedy (Gr). The third columns shows if composite (smooth+nonsmooth) optimization problems of the form $f+h$ are considered and, if so, the type of nonsmooth term $h$. The last column contains additional assumptions considered in the paper.  }
    \label{tab:rel_work}
\end{table}

\footnotetext[1]{Their framework analyzes general monotone variational inequalities; thus, covering some structured nonconvex problems such as convex-concave min-max optimization.}
The literature on the non-asymptotic convergence of BCG in the nonconvex setting is much more limited, while most existing results focus on asymptotic convergence, see~\cite{attouch2013convergence,lu2013randomized,xu2013block,razaviyayn2013unified,bonettini2016cyclic,xu2017globally,latafat2022block}. Among the relevant contributions investigating nonasymptotic convergence rates in the nonconvex setting, Csiba and Richtárik in~\cite{csiba2017global} proposed a unified analysis for an arbitrary block-selection rule, under the Polyak-Łojasiewicz (PL) or Weakly PL (WPL) properties, proving a linear convergence and a sublinear rate, respectively. For randomized BCG, Patrascu and Necoara in~\cite{patrascu2015efficient} established a sublinear convergence rate for BCPG in a more general setting, specifically when minimizing the sum of a nonconvex, smooth function and a convex, separable, and possibly nondifferentiable function. Sublinear convergence rates with high probability were also derived in~\cite{necoara2025efficiency} for (stochastic) random BCG for smooth but nonseparable functions.
 To our knowledge, for cyclic (proximal) BCG, the only nonasymptotic convergence guarantee in general nonconvex setting is given by Cai et al. in~\cite{cai2023cyclic}. In particular, they analyze a composite minimization problem where the smooth part consists of a finite sum of functions and the possibly nonsmooth part is block-separable. Under a non restrictive Lipschitz condition on the gradient with respect to a Mahalanobis norm (implied by  the global Lipschitz continuity of the gradient), they could prove sublinear convergence of cyclic BCPG. 
 For greedy BCG and BCPG, Nutini et al. in~\cite{Nutini_faster} considered different GS-like block selection rules proving sublinear and linear convergence in the general nonconvex setting and under the PL condition on the objective function, respectively. 
 
In all the contributions listed above, the step size in the BCG update is chosen either by assuming some knowledge of the (block) Lipschitz constant, which might not be the case in practical applications, or by using a line-search strategy. Recently, Liu et al. showed in~\cite{liu2023improved} the potential of combining BCG and adaptive step sizes. They proposed a block coordinate version of stochastic Adam but the convergence theory of their method has not been investigated. Moreover, a unified convergence theory for adaptive first-order methods has been very recently proposed in~\cite{gratton2026stochastic} in the related context of stochastic asynchronous gradient methods with delayed operators. We summarized the contributions in the nonconvex setting in Table~\ref{tab:rel_work_nonconvex}. For a more detailed discussion and comparison among convergence rates, see Section~\ref{sub:comp_works}.

\begin{table}[h]
    \centering
    \resizebox{\textwidth}{!}{
   \begin{tabular}{lccccccc}
   \hline
      \multicolumn{7}{c}{ \textbf{BCG global nonasymptotic rates for nonconvex functions}} \\  
   \hline
    & Block rule & \makecell{Nonsmooth \\extension}  &\makecell{Adaptive \\ step}  & \makecell{Additional assumptions}  & Rate for smooth objective &\\
    \hline
    \vspace{0.1cm}
    Csiba, Richtárik~\cite{csiba2017global} &  R/Gr  & \cmark & no & $f$ is PL & $\E [f(x^k)-f^*] = \mathcal{O}(\Theta^k)$ \\
    \hline
     \vspace{0.1cm}
    Csiba, Richtárik~\cite{csiba2017global} &  R/Gr  & \cmark & no & $f$ is WPL & $\E [f(x^k)-f^*] = \mathcal{O}(\frac{1}{k})$ \\
    \hline
     \vspace{0.1cm}
     Patrascu, Necoara~\cite{patrascu2015efficient}
      & R & \cmark & no & / &  $\min_{j=1,\dots,k}\E [\lVert \nabla f (x^j)\rVert^2] = \mathcal{O}\left(\frac{1}{k}\right)$\\
     \hline
     \vspace{0.1cm}
     Necoara, Chorobura~\cite{necoara2025efficiency} &  R & \xmark & no & / & $\min_{j=1,\dots,k}\lVert \nabla f (x^j)\rVert^2 = \mathcal{O}\left(\frac{1}{k}\right)$ w.h.p \\
    \hline
     \vspace{0.1cm}
    Nutini et al.~\cite{Nutini_faster}
    & Gr  & \cmark & no  & $f$ is PL  & $f(x^k)-f^* = \mathcal{O}(\Theta^k)$  \\
     \hline
     \vspace{0.1cm}
    Nutini et al.~\cite{Nutini_faster} &  Gr  & \cmark & no  & / & $\min_{j=1,\dots,k}\lVert \nabla f (x^j)\rVert^2 = \mathcal{O}\left(\frac{1}{k}\right)$ \\
    \hline
     \vspace{0.1cm}
     Cai et al.~\cite{cai2023cyclic}  & C & \cmark & no & $f$ in finite sum form & $\min_{j=1,\dots,k}\lVert \nabla f (x^j)\rVert^2 = \mathcal{O}\left(\frac{1}{k}\right)$ \\
     \hline
     \vspace{0.1cm}
     This paper  & R & \xmark & yes & / & $\E \left[ \average_{j=1,\dots,k}\lVert \nabla f (x^j)\rVert \right] = \mathcal{O}\left(\frac{1}{\sqrt{k}}\right)$ \\
     \hline
     \vspace{0.1cm}
     This paper  & C/Gr & \xmark & yes & / & $\average_{j=1,\dots,k}\lVert \nabla f (x^j)\rVert = \mathcal{O}\left(\frac{1}{\sqrt{k}}\right)$ \\
     \hline
\end{tabular}
}
   \caption{Summary of contributions establishing nonasymptotic convergence rates for the BCG for smooth and nonconvex functions. Notation in the first column: Random (R), Cyclic (C), Greedy (Gr). The fourth column contains additional assumptions made on the objective function with notation: Polyak–Łojasiewicz (PL), Weakly PL (WPL). All the contributions also assume (block) Lipschitz continuity of the gradient or other closely related assumptions which are not shown in this table. The last column contains the type of nonasymptotic convergence result, where $f^*$ denotes the optimal function value and with high probability is abbreviated to w.h.p. The norm used in the rate is not specified and it may vary depending on the contribution; thus, we refer to the original references for more details.}
    \label{tab:rel_work_nonconvex}
\end{table}

Other relevant contributions in the BCG literature succeeded in relaxing the hypothesis of block-Lipschitz continuity of the gradient by introducing weaker hypothesis, often based on Bregman distance~\cite{hanzely2021fastest,gao2019leveraging,9414191,hanzely2021accelerated,hien2025block}; or considered the more restrictive setting of alternate minimization, where the number of blocks is fixed at two~\cite{attouch2010proximal,bolte2014proximal,beck2015convergence,diakonikolas2018alternating}. Analyzing these rather different settings is out of the scope of this work, which focuses on the smooth, block-Lipschitz continuous, and possibly nonconvex case. Therefore, these contributions do not appear in Table~\ref{tab:rel_work} and~\ref{tab:rel_work_nonconvex}.

\section{The \name\ framework }
\label{sec:algo}
We proceed now by introducing the \name\ algorithm (Algorithm~\ref{alg:bAdaGrad}).
Let $\{1,\dots,N\} = \bigcup_{\ell=1}^d b_\ell$ be a partition of the variables into blocks which we assume remains fixed within the iterations.
Let also $g_k^{(\ell_k)}$ the block gradient defined in~\eqref{gOxD-def}. We consider three block selection rules in our analysis: 
\begin{itemize}
    \item \textbf{GS}: pick the block $b_{\ell_k}$ with the largest gradient norm, corresponding to
    \begin{equation*}
     \lVert g_k^{(\ell_k)} \rVert=\argmax_{\ell=1,\dots,d}  \lVert g_k^{(\ell)} \rVert.
    \end{equation*}
    \item \textbf{C}: pick the block $b_{\ell_k}$ in cyclic order; thus, every $d$ iterations, all the blocks are updated once. 
    \item \textbf{UR}: pick the block  $b_{\ell_k}$ uniformly at random, with a probability of $1/d$.
\end{itemize}
One can notice that the GS rule requires the evaluation of the full gradient at each iteration; thus, being in general more expensive than the UR and the cyclic rule. We are now ready to introduce the \name\ algorithm.
\algo{alg:bAdaGrad}{\tal{\name}}
{
\begin{description}
\item[Step 0: Initialization. ]
  A starting point $x_0$, a constant $\varsigma\in(0,1)$, and the initial weight vector $w_{i,-1}=\sqrt{\varsigma}$ for $i=1,\dots,N$, a fixed partition $\{1,\dots,N\} = \bigcup_{\ell=1}^d b_\ell$. Set $k=0$. 

\item[Step 1: Choose the block.] Select the block $b_{\ell_k} \subseteq \{1,\dots,N\}$ to be updated, according to the GS, UR or C rule.
 \item[Step 2: Gradient] If not already computed in Step 1, compute the gradient $g_{i,k}$ for $i \in b_{\ell_k}$.
  \item[Step 3: Optimization weights.] Compute 
  \beqn{weig_R_det}
  \begin{aligned}
      & w_{i,k} = \sqrt{(w_{i,k-1})^2+g_{i,k}^2}
      \quad \quad (i \in b_{\ell_k}),\\
      & w_{i,k}=w_{i,k-1} \quad \quad (i \in \{1,\dots,N\} \backslash b_{\ell_k}).
  \end{aligned}
   \eeqn
   
\item[Step 4: Compute the step.] Compute
  \beqn{step_R_det}
  \begin{aligned}
       &s_{i,k}=-\frac{g_{i,k}}{w_{i,k}} \quad \quad (i \in b_{\ell_k}).\\
       & s_{i,k}=0 \quad \quad (i \in \{1,\dots,N\} \backslash b_{\ell_k}).
  \end{aligned}
  \eeqn


\item[Step 5: New iterate.] Define
   \begin{equation}
       \qquad x_{k+1} = x_k + s_k, 
       \label{eq:iter_update}
   \end{equation}  
    increment $k$ by one and return to Step~1.
\end{description}
}

The definition of the adaptive step size used in \req{weig_R_det} at step 3 of Algorithm~\ref{alg:bAdaGrad} follows the same principle of the AdaGrad algorithm, with the crucial difference that only the weights in the selected block are updated. This means that the algorithm does not accumulate the entire history of the gradient but only the history of the gradient in the blocks selected by the BCD scheme. Indeed, using \req{weig_R_det} and the notation in~\eqref{gOxD-def}, one verifies that 

\begin{equation}
    w_{i,k} = \sqrt{\varsigma + \sum_{j=0}^k \left(g_{i,j}^{(\ell_j)} \right)^2}, \quad \text{and } \quad s_{i,k}=-\frac{g_{i,k}^{(\ell_k)}}{w_{i,k}}.
    \label{eq:explicit_weights}
\end{equation}
It is clear from~\eqref{gOxD-def} and~\eqref{eq:explicit_weights} that the step $s_{i,k}$ is non-zero only in the selected block $b_{\ell_k}$; thus, the update of the iterate $x_k$ in~\eqref{eq:iter_update} only affects the component in the block, and it is of the same form as the one in~\eqref{eq:block_grad_step}. 

\section{Convergence}
\label{sec:convergence}
We now present the nonasymptotic convergence analysis of \name. We develop a unified framework that covers GS, UR, C. We first characterize the descent property of the algorithm by bounding the difference in the function values of two consecutive iterates in Lemma~\ref{lemma:dec_bAdaGrad}. Then, we provide a unified strategy to upper bound the average norm of the block gradient, independently from the block selection rule considered, and finally we derive specialized bounds on the average norm of the full gradient for each block selection rule (Theorem~\ref{theorem:bAdag_GS}).

Let us introduce standard assumptions in the nonasymptotic convergence analysis of first order BCG algorithms.
\begin{description}
\item[AS.1:] the objective function $f$ is continuously differentiable;
\item[AS.2:] there exists a constant $\flow$ such that, for all $x$, $f(x)\ge \flow$;
\item[AS.3:] given a partition of the components of the iterate in blocks $\{1,\dots,N\} = \bigcup_{\ell=1}^d b_\ell$, the gradient $g$ is block Lipschitz continuous with block Lipschitz constant $L_\ell \geq 0$, that is
  \begin{equation}
       \|g^{(\ell)}(x)-g^{(\ell)}(x+s^{(\ell)})\| \le L_\ell \|s^{(\ell)}\| \quad \text{for every } \ell=1,\dots,d,
       \label{eq:block_lip}
  \end{equation}
  
   for all $x\in \Re^N$; where $s^{(\ell)} \in \Re^N$ denotes a vector whose support is contained in $b_{\ell}$. It follows from AS.3 that
\begin{equation}
    f(x+s^{(\ell)}) \leq  f(x)+  g(x)^T s^{(\ell)} +  \frac{L_{\ell}}{2} ||s^{(\ell)}||^2. 
    \label{eq:taylor_app}
\end{equation}

We also define the Lipschitz continuity of the full gradient, which is only needed to prove convergence for cyclic \name.
\item[AS.4:] the gradient $g$ is Lipschitz continuous with global Lipschitz constant $L\geq 0$:
   \[
   \|g(x)-g(y)\| \le L \|x-y\| \quad \text{ for all } x,y\in \Re^N.
   \]   
\end{description}
Notice that AS.3 is implied by AS.4 by chosing $L_\ell=L$, for every $\ell=1,\dots,d$. 

Moreover, the UR rule introduces randomness in the algorithm; thus, the sequence generated by \name\ is a random process. As a consequence, the convergence analysis for the \name\ with the UR rule is in expectation. Specifically, at iteration $k$, the expectation conditioned to knowing the history of the past iterate $x_0, \dots, x_{k-1}$ will be denoted by the symbol $\E_k[\cdot]$. Moreover, when the index $\ell_k$ of the block to optimize at each iteration is chosen uniformly at random,  it holds
 \begin{equation}
     \E_k[\lVert g_k^{(\ell_k)} \rVert]=  \frac{1}{d} \sum_{\ell=1}^d \lVert g_k^{(\ell)} \rVert.
     \label{eq:unbias}
 \end{equation}

Furthermore, the block gradient $g_k^{(\ell_k)}$ satisfies $\E_k[ g_k^{(\ell_k)}]= 1/d \sum_{\ell=1}^d  g_k^{(\ell)} = 1/d \ g_k$, which means that the block gradient is an unbiased estimator of the true gradient, except for a constant factor $d$. Thus, the analysis of  \name\ with the UR rule is partially inspired by that of the stochastic AdaGrad algorithm in~\cite{gratton2025complexity}. Nevertheless, there are some crucial differences:
\begin{itemize}
    \item[a)] we do not consider any rescaling in our algorithm; therefore, the theory in~\cite{gratton2025complexity} does not apply when the stochastic estimator of the gradient is $g_k^{(\ell_k)}$; 
    \item[b)] Contrary to~\cite{gratton2025complexity}, we do not assume boundedness of the gradient in our analysis. 
\end{itemize}

We are now ready to state the descent lemma for the \name\ algorithm. The proof of the lemma is provided only for the UR rule since that for GS and C follows from the same idea by removing expected values. In fact, both the rules do not introduce any randomness in the algorithm; thus, all the quantities in expectation are deterministic.  

\begin{lemma}
    \label{lemma:dec_bAdaGrad}
    Suppose that AS.1 - AS.3  hold. Then, according to the block selection rules, we have that, for all $j\ge0$,
\begin{equation}
  \text{\textbf{(GS/C) :}} \quad    f(x_{j+1})
\le f(x_j) -\sum_{i=1}^N \frac{ (g_{i,j}^{(\ell_j)})^2}{ w_{i,j}}
     + \frac{L_{\ell_j}}{2} \sum_{i=1}^N \frac{ (g_{i,j}^{(\ell_j)})^2}{w_{i,j}^2}, 
      \label{gen-decr-det-gs-c} 
\end{equation}
\begin{equation}
    \text{\textbf{(UR) :}} \quad  \E_j\left[f(x_{j+1})\right]
\leq f(x_j) -\E_j\left[\sum_{i=1}^N \frac{ (g_{i,j}^{(\ell_j)})^2}{ w_{i,j}} \right]
      +\frac{L_{\ell_j}}{2}  \E_j\left[ \sum_{i=1}^N \frac{ (g_{i,j}^{(\ell_j)})^2}{w_{i,j}^2} \right]. 
    \label{gen-decr-det}
\end{equation}

    
\end{lemma}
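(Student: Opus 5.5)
The plan is to apply the block descent inequality \eqref{eq:taylor_app}, which follows from AS.3, with $x = x_j$ and $s^{(\ell)} = s_j$, where $\ell = \ell_j$. This is legitimate because, by \eqref{step_R_det}, the step $s_j$ is supported in the block $b_{\ell_j}$. We obtain
\[
f(x_{j+1}) \le f(x_j) + g_j^T s_j + \frac{L_{\ell_j}}{2}\|s_j\|^2 .
\]
Next we substitute the explicit form \eqref{eq:explicit_weights}, namely $s_{i,j} = -g^{(\ell_j)}_{i,j}/w_{i,j}$, which vanishes outside $b_{\ell_j}$. Since $g_{i,j} = g^{(\ell_j)}_{i,j}$ on $b_{\ell_j}$, the linear term becomes
\[
g_j^T s_j = -\sum_{i=1}^N \frac{(g^{(\ell_j)}_{i,j})^2}{w_{i,j}},
\]
and the quadratic term becomes
\[
\|s_j\|^2 = \sum_{i=1}^N \frac{(g^{(\ell_j)}_{i,j})^2}{w_{i,j}^2}.
\]
This gives \eqref{gen-decr-det-gs-c} directly. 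It holds for every realization of the block index, so it holds for GS, C, and any other rule. The weights are strictly positive, since $w_{i,j} \ge \sqrt{\varsigma} > 0$, so all divisions are well defined.

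For the UR rule, the same pathwise inequality holds for every possible value of $\ell_j$. We then take the conditional expectation $\E_j[\cdot]$ given $x_0,\dots,x_j$ (the history up to the current iterate). Under this conditioning, $f(x_j)$ is deterministic, so monotonicity and linearity of conditional expectation give \eqref{gen-decr-det}.

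The only delicate point is that $L_{\ell_j}$ and $w_{i,j}$ are themselves random, since both depend on $\ell_j$. The factor $L_{\ell_j}$ therefore cannot be pulled outside $\E_j$ literally. There are two ways to handle it. One is to read the constant in \eqref{gen-decr-det} as sitting inside the expectation. The other, which I would state explicitly, is to bound $L_{\ell_j} \le L_{\max} := \max_\ell L_\ell$ before taking expectations and interpret the lemma with that constant. Beyond this, I do not expect any real obstacle: the proof is essentially one application of \eqref{eq:taylor_app} together with the explicit step formula.
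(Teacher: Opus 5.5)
Your proof is correct and is essentially the paper's argument: apply \eqref{eq:taylor_app} to the step $s_j$, which is supported in $b_{\ell_j}$, substitute $s_{i,j}=-g_{i,j}^{(\ell_j)}/w_{i,j}$, and for UR take $\E_j$ of the pathwise inequality. Your remark that $L_{\ell_j}$ is random and cannot literally be pulled outside $\E_j$ is a fair point that the paper's proof glosses over (it writes $\frac{L_{\ell_j}}{2}\E_j[\cdot]$ exactly as in the statement); it does no harm, because the proof of Theorem~\ref{theorem:bAdag_GS} immediately replaces $L_{\ell_j}$ by $\bar L=\max_\ell L_\ell$, which is your second option.
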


\proof{
Using Assumptions AS.1 and AS.3, the definition of the step in \req{step_R_det}, and the inequality in \req{eq:taylor_app}, we derive that
\beqn{f_dec}
\begin{aligned}
\E_j\left[f(x_{j+1}) \right]&\leq  f(x_j)+ \E_j\left[ g_j^T s_j \right]+  \frac{L_{\ell_j}}{2} \E_j\left[ ||s_j||^2 \right] \\
&= f(x_j)+ \E_j\left[ \sum_{i\in b_{\ell_j}} g_{i,j}s_{i,j} \right]+  \frac{L_{\ell_j}}{2} \E_j\left[ \sum_{i\in b_{\ell_j}}s_{i,j}^2 \right] \\
&= f(x_j) - \E_j\left[ \sum_{i \in b_{\ell_j}} \frac{ g_{i,j}^2}{ w_{i,j}} \right]
      +\frac{L_{\ell_j}}{2} \E_j\left[ \sum_{i \in b_{\ell_j}} \frac{ g_{i,j}^2}{w_{i,j}^2} \right]\\
&=f(x_j) - \E_j\left[ \sum_{i = 1}^N \frac{ (g_{i,j}^{(\ell_j)})^2}{ w_{i,j}} \right]
     + \frac{L_{\ell_j}}{2} \E_j\left[ \sum_{i=1}^N \frac{ (g_{i,j}^{(\ell_j)})^2}{ w_{i,j}^2}      \right],
\end{aligned}
\eeqn
which concludes the lemma.
}

We introduce two technical lemmas that are crucial to prove our main convergence result.  

\begin{lemma}[\cite{bellavia2025fast}]
    \label{log_bound}
    Suppose that, for $u>0$ and $\gamma_1,\gamma_2>0$,
\begin{equation*}
    \gamma_1u \leq \gamma_2 \log(u) + \gamma_3.
\end{equation*}
Then,
\begin{equation*}
    u\leq \frac{2 \gamma_3}{\gamma_1} + \frac{2 \gamma_2}{\gamma_1} \left( \log \left( \frac{2 \gamma_2}{\gamma_1} \right) -1\right).
\end{equation*}
\end{lemma}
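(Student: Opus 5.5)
The plan is to turn the implicit inequality $\gamma_1 u \le \gamma_2\log(u)+\gamma_3$ into an explicit one by replacing $\log(u)$ with its tangent line at a carefully chosen point. Since $\log$ is concave, for every $t>0$ and $u>0$ we have
\begin{equation*}
\log(u) \le \log(t) + \frac{u}{t} - 1 .
\end{equation*}
The term $\gamma_2 u/t$ then has to be absorbed into the left-hand side $\gamma_1 u$, and choosing it to use exactly half of that left-hand side yields the factors $2$ in the statement.

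\textbf{Step 1.} Choose $t = 2\gamma_2/\gamma_1 > 0$, which is legitimate because $\gamma_1,\gamma_2>0$. Substituting the tangent bound into the hypothesis gives
\begin{equation*}
\gamma_1 u \le \gamma_2\log\left(\frac{2\gamma_2}{\gamma_1}\right) + \frac{\gamma_1}{2}\,u - \gamma_2 + \gamma_3 .
\end{equation*}

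\textbf{Step 2.} Move $\frac{\gamma_1}{2}u$ to the left to obtain
\begin{equation*}
\frac{\gamma_1}{2}u \le \gamma_3 + \gamma_2\left(\log\left(\frac{2\gamma_2}{\gamma_1}\right)-1\right).
\end{equation*}
Multiplying by $2/\gamma_1>0$ then gives exactly the claimed bound.

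No sign condition on $\gamma_3$ is used anywhere, which is consistent with the statement leaving $\gamma_3$ unrestricted. The argument has no genuine obstacle. The only point that requires care is choosing the tangent point so that the resulting coefficient of $u$ is $\gamma_1/2$; any other choice $t = c\gamma_2/\gamma_1$ with $c>1$ would also work but would produce different constants from those stated.
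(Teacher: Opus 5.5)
Your proof is correct and complete. The tangent-line inequality $\log u \le \log t + u/t - 1$ at $t = 2\gamma_2/\gamma_1$ absorbs exactly half of $\gamma_1 u$, rearranging yields precisely the stated bound, and you are right that no sign condition on $\gamma_3$ is needed. The paper gives no proof of this lemma and simply imports it from \cite{bellavia2025fast}, so there is no in-paper argument to compare against. Your derivation is a short self-contained justification; it is equivalent to bounding $\frac{\gamma_1}{2}u - \gamma_2\log u$ below by its minimum value $\gamma_2 - \gamma_2\log(2\gamma_2/\gamma_1)$, which is attained at $u = 2\gamma_2/\gamma_1$.
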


The next lemma is from~\cite{techlemma1,techlemma2,WuWardBott18} and we use it to bound the contribution of the quadratically decaying term in the descent lemma of the \name\ algorithm.

\begin{lemma}
    \label{gen:series}
    Let $\{c_k\}_{k\ge 0}$ be a non-negative sequence and  $\xi>0$.
Then
\begin{equation*}
   \sum_{j=0}^k  \frac{c_j}{(\xi+ \sum_{q=0}^j c_{q})}
\le  \log\left(1+\frac{1}{\xi} \sum_{j=0}^k c_j \right). 
\end{equation*}
\end{lemma}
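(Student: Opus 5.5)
The plan is to compare the sum with an integral of $1/t$, term by term. Set $S_j \eqdef \xi + \sum_{q=0}^j c_q$ for $j\ge 0$ and $S_{-1}\eqdef \xi$. Then $c_j = S_j - S_{j-1}$ and $S_j \ge S_{j-1} \ge \xi > 0$ for every $j \ge 0$, since the $c_q$ are non-negative. The left-hand side of Lemma~\ref{gen:series} becomes $\sum_{j=0}^k (S_j - S_{j-1})/S_j$.

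The key step is the per-term inequality
\begin{equation*}
\frac{S_j - S_{j-1}}{S_j} \le \log\left(\frac{S_j}{S_{j-1}}\right).
\end{equation*}
This follows from the elementary bound $1 - 1/u \le \log u$ for $u \ge 1$, applied with $u = S_j/S_{j-1}$. Equivalently, on $[S_{j-1}, S_j]$ the integrand $1/t$ is at least $1/S_j$, so
\begin{equation*}
\frac{S_j - S_{j-1}}{S_j} \le \int_{S_{j-1}}^{S_j} \frac{dt}{t} = \log S_j - \log S_{j-1}.
\end{equation*}
When $c_j = 0$ both sides vanish, so no division issue arises.

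Summing over $j = 0, \dots, k$, the right-hand sides telescope to
\begin{equation*}
\log S_k - \log S_{-1} = \log\left(\frac{\xi + \sum_{j=0}^k c_j}{\xi}\right) = \log\left(1 + \frac{1}{\xi}\sum_{j=0}^k c_j\right),
\end{equation*}
which is exactly the claimed bound.

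There is no genuine obstacle. The only points needing care are the conventions: $S_{-1} = \xi > 0$ ensures every logarithm is well defined, and the monotonicity of $S_j$ (from $c_q \ge 0$) is what makes the integral comparison, or the inequality $1 - 1/u \le \log u$, valid for each term.
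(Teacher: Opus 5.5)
Your proof is correct: the per-term bound $(S_j-S_{j-1})/S_j \le \log(S_j/S_{j-1})$, obtained from $1-1/u\le \log u$ or equivalently from $\int_{S_{j-1}}^{S_j} dt/t$, followed by telescoping, is the standard argument. The paper does not reprove the lemma but defers to Lemma~3.1 of \cite{gratton2024complexity}, whose proof is essentially this same integral comparison for the non-increasing function $t\mapsto 1/(\xi+t)$.
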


The proof of the lemma can be found in~\cite[Lemma 3.1]{gratton2024complexity}. 

The following lemma aims at providing a lower bound for the first summation in~\eqref{gen-decr-det-gs-c} and~\eqref{gen-decr-det} which linearly decreases with the weights $w_{i,j}$.  

\begin{lemma}
    \label{lem:first_sum}
    Assume \name\ is applied to problem~\eqref{eq:problem}, then
$$ \sqrt{\varsigma}  \sqrt{ 1 + \frac{1}{\varsigma}  \sum_{j=0}^k \|g_j^{(\ell_j)}\|^2 } - \sqrt{\varsigma} \leq \sum_{j=0}^k\sum_{i =1}^N \frac{ (g_{i,j}^{(\ell_j)})^2}{ w_{i,j}}. $$
\end{lemma}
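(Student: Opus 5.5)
The plan is to bound each weight $w_{i,j}$ from above by a single quantity that does not depend on $i$, and then use a telescoping square-root inequality. By \eqref{eq:explicit_weights}, $w_{i,j}^2 = \varsigma + \sum_{q=0}^j (g_{i,q}^{(\ell_q)})^2$. Summing over $i$ gives, for every $i$,
\[
w_{i,j}^2 \le \varsigma + \sum_{q=0}^j \sum_{r=1}^N (g_{r,q}^{(\ell_q)})^2 = \varsigma + \sum_{q=0}^j \|g_q^{(\ell_q)}\|^2 \eqdef W_j^2 .
\]
Hence
\[
\sum_{i=1}^N \frac{(g_{i,j}^{(\ell_j)})^2}{w_{i,j}} \ge \frac{\|g_j^{(\ell_j)}\|^2}{W_j},
\]
and it suffices to lower bound $\sum_{j=0}^k \|g_j^{(\ell_j)}\|^2/W_j$.

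Set $a_j = \|g_j^{(\ell_j)}\|^2$, so that $W_j^2 - W_{j-1}^2 = a_j$ with the convention $W_{-1}^2 = \varsigma$. Since $W_j \ge W_{j-1}$,
\[
\frac{a_j}{W_j} = \frac{W_j^2 - W_{j-1}^2}{W_j} \ge \frac{(W_j - W_{j-1})(W_j + W_{j-1})}{W_j + W_{j-1}} = W_j - W_{j-1},
\]
where the inequality uses $W_j \le W_j + W_{j-1}$. Summing from $j=0$ to $k$ telescopes to
\[
W_k - W_{-1} = \sqrt{\varsigma + \textstyle\sum_{j=0}^k a_j} - \sqrt{\varsigma}.
\]
This is exactly $\sqrt{\varsigma}\sqrt{1 + \frac{1}{\varsigma}\sum_{j=0}^k \|g_j^{(\ell_j)}\|^2} - \sqrt{\varsigma}$, which is the claimed lower bound.

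There is no real obstacle here. The only point needing care is the first step: the per-coordinate weights must be replaced by the common upper bound $W_j$, and this works because each coordinate's accumulated sum is at most the total over all coordinates and all past blocks. Degenerate terms with $a_j = 0$ contribute zero on both sides, so they need no separate treatment.
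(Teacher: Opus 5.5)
Your proof is correct, and it reaches the same bound by a slightly different route from the paper's. Both arguments start the same way: each coordinate weight is dominated by the accumulated squared block-gradient norms summed over all coordinates.

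The difference is in the denominator. The paper uses the monotonicity of $w_{i,j}$ in $j$ to replace every denominator by the single final-time quantity $W_k=\sqrt{\varsigma+\sum_{q=0}^k\|g_q^{(\ell_q)}\|^2}$. The numerators then sum exactly to $W_k^2-\varsigma$, giving the identity $\sum_{j=0}^k \|g_j^{(\ell_j)}\|^2/W_k = W_k-\varsigma/W_k$, and $\varsigma/W_k\le\sqrt{\varsigma}$ finishes. You instead keep the running denominator $W_j$. This gives a termwise sharper intermediate sum, but you then need the telescoping inequality $a_j/W_j\ge W_j-W_{j-1}$ to collapse it.

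The paper's version is a one-line algebraic computation. It also passes through the marginally stronger bound $W_k-\varsigma/W_k$ before relaxing to $W_k-\sqrt{\varsigma}$. Yours is the standard AdaGrad-norm telescoping argument: it lower-bounds each summand individually and avoids the whole-horizon comparison $w_{i,j}\le w_{i,k}$. For the way the lemma is used in Theorem~\ref{theorem:bAdag_GS}, the two are interchangeable.
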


\proof{
Since the sequence of $w_{i,j}$ in \req{eq:explicit_weights} is increasing in $j$, $w_{i,j} \leq w_{i,k}$ for every $j=0,\dots,k$. Then 
\begin{equation}
    w_{i,j} \leq w_{i,k} =  \sqrt{ \varsigma + \sum_{q=0}^k (g_{i,q}^{(\ell_q)})^2} \leq \sqrt{ \varsigma + \sum_{q=0}^k \lVert g_{q}^{(\ell_q)} \rVert^2},
\end{equation}
from which we obtain
\begin{equation} 
     \sum_{j=0}^k \frac{ \|g_j^{(\ell_j)}\|^2}{ \sqrt{ \varsigma + \sum_{q=0}^k \lVert g_{q}^{(\ell_q)} \rVert^2}}   \leq \sum_{j=0}^k\sum_{i =1}^N \frac{ (g_{i,j}^{(\ell_j)})^2}{ w_{i,j}},
     \label{eq:first_step_lem}
\end{equation}

Furthermore, we have
\begin{equation*}
\begin{aligned}
   \sum_{j=0}^k \frac{ \|g_j^{(\ell_j)}\|^2}{ \sqrt{ \varsigma + \sum_{q=0}^k \lVert g_{q}^{(\ell_q)} \rVert^2}} = &  \frac{ \sum_{j=0}^k \|g_j^{(\ell_j)}\|^2}{ \sqrt{ \varsigma + \sum_{q=0}^k \lVert g_{q}^{(\ell_q)} \rVert^2}} + \frac{ \varsigma}{ \sqrt{ \varsigma + \sum_{q=0}^k \lVert g_{q}^{(\ell_q)} \rVert^2}}  - \frac{ \varsigma}{ \sqrt{ \varsigma + \sum_{q=0}^k \lVert g_{q}^{(\ell_q)} \rVert^2}} \\
   = & \sqrt{ \varsigma + \sum_{j=0}^k \lVert g_{j}^{(\ell_j)} \rVert^2} - \frac{ \varsigma}{ \sqrt{ \varsigma + \sum_{q=0}^k \lVert g_{q}^{(\ell_q)} \rVert^2}} \\
    \geq & \sqrt{ \varsigma + \sum_{j=0}^k \lVert g_{j}^{(\ell_j)} \rVert^2} - \sqrt{\varsigma},\\
    = & \sqrt{\varsigma}\cdot \sqrt{ 1 + \frac{1}{\varsigma}  \sum_{j=0}^k \|g_j^{(\ell_j)}\|^2 } - \sqrt{\varsigma},
\end{aligned}
\end{equation*}
which, combined with~\eqref{eq:first_step_lem}, concludes the proof.
}
We are now ready to state our main result, characterizing the nonsymptotic rate of convergence of \name. 

\begin{theorem}
    \label{theorem:bAdag_GS}
    Suppose that AS.1--AS.3 hold and that
\name\ is applied to problem \req{eq:problem}.
Let $\varsigma>0$, $d$ the number of blocks, and
$\Gamma_0 \eqdef f(x_0)-\flow.$ Then,
\begin{itemize}
    \item \textbf{GS:}
    \begin{equation}
        \average_{j\in\iiz{k}}\|g_j\| \le \sqrt{d} \frac{\Theta }{\sqrt{k+1}}.
        \label{gradbound_det}
    \end{equation}
    \item \textbf{C:} if, in addition, AS.4 holds, then
    \begin{equation}
        \average_{t \in \{0,\dots,T\}} \lVert g_{td} \rVert \leq  \sqrt{2  \left(  1 + d\frac{L^2}{\varsigma^2} \right)} \frac{\Theta }{\sqrt{T+1}}.
        \label{eq:rate_cyclic}
    \end{equation}
    \item \textbf{UR:}
    \begin{equation}
        \E \left[ \average_{j\in\iiz{k}}\|g_j\| \right] \le d \frac{  \Theta}{\sqrt{k+1}},
        \label{gradbound}
    \end{equation}
\end{itemize}

with \begin{equation}
    \label{k6-def}
    \Theta \eqdef 2\Gamma_0 + 2\sqrt{\varsigma} + 2N\Bar{L}\left[ \log\left( \frac{2N\Bar{L}}{\sqrt{\varsigma}} \right) -1\right],
\end{equation}

where $\Bar{L}=\max_{\ell=1,\dots,d}L_\ell$.
\end{theorem}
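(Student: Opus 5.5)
The plan is to first obtain a rule-independent bound on the cumulative block gradients $S_k \eqdef \sum_{j=0}^k \|g_j^{(\ell_j)}\|^2$, and then convert it into a bound on full gradients for each selection rule. First sum the descent inequality of Lemma~\ref{lemma:dec_bAdaGrad} over $j=0,\dots,k$ (taking total expectations in the UR case) and use AS.2 to telescope. This gives
\[
\sum_{j=0}^k\sum_{i=1}^N \frac{(g_{i,j}^{(\ell_j)})^2}{w_{i,j}} \le \Gamma_0 + \frac{\bar L}{2}\sum_{i=1}^N\sum_{j=0}^k \frac{(g_{i,j}^{(\ell_j)})^2}{w_{i,j}^2}.
\]
For each fixed $i$, the right-hand inner sum has exactly the form of Lemma~\ref{gen:series}, with $c_j=(g_{i,j}^{(\ell_j)})^2$ and $\xi=\varsigma$, by \eqref{eq:explicit_weights}. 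It is therefore at most $\log(1+S_k/\varsigma)$, so the right-hand side is bounded by $\Gamma_0+\frac{N\bar L}{2}\log(1+S_k/\varsigma)$. The left-hand side is bounded below by Lemma~\ref{lem:first_sum}. Setting $u=\sqrt{1+S_k/\varsigma}$, we get
\[
\sqrt{\varsigma}\,u \le \Gamma_0+\sqrt{\varsigma}+N\bar L\log u .
\]
Lemma~\ref{log_bound} then gives $\sqrt{\varsigma}\,u\le\Theta$, and hence $\sqrt{S_k}\le\Theta$. In the UR case the logarithm sits inside an expectation, so I would use Jensen (concavity of $\log$ and $\sqrt{\cdot}$) to obtain $\sqrt{\varsigma}\,\E[u]\le \Gamma_0+\sqrt\varsigma+N\bar L\log\E[u]$, and conclude $\E\sqrt{S_k}\le\Theta$.

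Next, Cauchy--Schwarz gives $\sum_{j}\|g_j^{(\ell_j)}\|\le\sqrt{k+1}\sqrt{S_k}$, so the average block gradient is at most $\Theta/\sqrt{k+1}$. Each rule then transfers this to full gradients differently:
\begin{itemize}
\item \textbf{GS:} the selected block is the largest, so $\|g_j\|^2\le d\|g_j^{(\ell_j)}\|^2$, which yields \eqref{gradbound_det}.
\item \textbf{UR:} by \eqref{eq:unbias}, $\E_j\|g_j^{(\ell_j)}\|=\frac1d\sum_\ell\|g_j^{(\ell)}\|\ge \frac1d\|g_j\|$. Applying the tower property and the bound on $\E\sqrt{S_k}$ gives \eqref{gradbound} with factor $d$.
\end{itemize}

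The cyclic case is the main obstacle. Within the cycle $t$ (iterations $td,\dots,td+d-1$) I would compare $g_{td}^{(\ell)}$ with $g_{td+\ell-1}^{(\ell)}$. Using AS.4 and $w_{i,j}\ge\sqrt\varsigma$, each step satisfies $\|s_j\|\le\|g_j^{(\ell_j)}\|/\sqrt\varsigma$. Hence
\[
\|g_{td}^{(\ell)}\|\le\|g_{td+\ell-1}^{(\ell)}\|+\frac{L}{\sqrt\varsigma}\sum_{m<\ell}\|g_{td+m}^{(\ell_{td+m})}\|.
\]
Squaring, using $(a+b)^2\le2a^2+2b^2$ and Cauchy--Schwarz on the inner sum, and summing over $\ell$ gives $\|g_{td}\|^2\le 2(1+dL^2/\varsigma)\sum_{\text{cycle}}\|g_j^{(\ell_j)}\|^2$, up to constants. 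I would then sum over $t\le T$, bound the total by $S_{(T+1)d-1}\le\Theta^2$, and apply Cauchy--Schwarz over $t$ to obtain \eqref{eq:rate_cyclic}.

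The delicate point is matching the stated constant $L^2/\varsigma^2$ rather than $L^2/\varsigma$. I expect this comes from a cruder per-coordinate bound; since $\varsigma<1$, the bound with $L^2/\varsigma$ implies the stated one.
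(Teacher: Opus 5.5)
Your proof of the rule-independent bound $\sqrt{S_k}\le\Theta$ (and $\E\sqrt{S_k}\le\Theta$ via Jensen) follows the paper, as do your GS and UR cases, and all of these are correct. The gap is in the cyclic case, at the step you pass over with ``up to constants''.

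You bound $\|x_{td+\ell-1}-x_{td}\|$ by the triangle inequality and then apply Cauchy--Schwarz to $\bigl(\sum_{m=0}^{\ell-2}\|g_{td+m}^{(\ell_{td+m})}\|\bigr)^2$. That costs a factor $\ell-1$. After summing over $\ell=1,\dots,d$, the first block gradient of the cycle carries weight $\sum_{\ell=2}^{d}(\ell-1)=d(d-1)/2$. Your route therefore only gives
\[
\|g_{td}\|^2\le 2\Bigl(1+\frac{d(d-1)}{2}\,\frac{L^2}{\varsigma}\Bigr)\sum_{\text{cycle}}\|g_j^{(\ell_j)}\|^2 ,
\]
which is quadratic in $d$. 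It is dominated by the stated factor $2(1+dL^2/\varsigma^2)$ only when $(d-1)\varsigma\le 2$, so it does not prove \eqref{eq:rate_cyclic} in general. Your closing remark about $\varsigma<1$ handles only the $\varsigma$ versus $\varsigma^2$ discrepancy, not this one. It is true that using $w_{i,j}\ge\sqrt{\varsigma}$ is sharper than the paper's cruder $w\ge\varsigma$.

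The missing observation, which the paper uses, is that within a cycle the steps $s_{td},\dots,s_{td+\ell-2}$ are supported on the pairwise disjoint blocks $b_1,\dots,b_{\ell-1}$. By orthogonality,
\[
\|x_{td+\ell-1}-x_{td}\|^2=\sum_{m=0}^{\ell-2}\|s_{td+m}\|^2\le\frac{1}{\varsigma}\sum_{m=0}^{\ell-2}\|g_{td+m}^{(\ell_{td+m})}\|^2 ,
\]
with no factor $\ell-1$. Summing over $\ell$, each term of the cycle then appears at most $d-1$ times. This gives $\|g_{td}\|^2\le 2\bigl(1+(d-1)L^2/\varsigma\bigr)\sum_{\text{cycle}}\|g_j^{(\ell_j)}\|^2$, and now $\varsigma<1$ does imply the stated constant. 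The rest of your cyclic argument then goes through as you describe: summing over $t$, bounding the total by $S_{(T+1)d-1}\le\Theta^2$, and applying Cauchy--Schwarz over $t$.
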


\proof{Since the descent lemma for the GS/C (deterministic) and UR rule (stochastic) differ, we analyze separately these two cases. However, the proof follows the same strategy, with some more technical steps needed in the UR selection case since expected values are involved.

\textbf{GS/C:} Starting from the descent lemma in~\eqref{gen-decr-det-gs-c} and summing for $j=0,\dots,k$,
\begin{equation*}
    f(x_0)-f(x_{k+1}) \geq 
\sum_{j=0}^k \sum_{i=1}^N \frac{(g_{i,j}^{(\ell_j)})^2}{ w_{i,j}}
   - \frac{\Bar{L}}{2} \sum_{j=0}^k \sum_{i=1}^N \frac{ (g_{i,j}^{(\ell_j)})^2}{w_{i,j}^2},
\end{equation*}
where $\Bar{L}=\max_{\ell=1,\dots,d}L_\ell$.
Rearranging the terms and adding the definition of $\Gamma_0$, 
\beqn{dec_grad_det}
\sum_{j=0}^k\sum_{i =1}^N \frac{ (g_{i,j}^{(\ell_j)})^2}{ w_{i,j}}
\leq \Gamma_0 + \frac{\Bar{L}}{2} \sum_{j=0}^k \sum_{i=1}^N \frac{ (g_{i,j}^{(\ell_j)})^2}{w_{i,j}^2}.
\eeqn
We first focus on computing an upper bound of the right hand side. In particular, we use the reformulation of $w_{i,j}$ in~\eqref{eq:explicit_weights} and Lemma \ref{gen:series} with $c_{j}=(g_{i,j}^{(\ell_j)})^2$ and $\xi = \zeta$, yielding
\beqn{use_lemma_det}
\begin{aligned}
\sum_{i =1}^N \sum_{j=0}^k  \frac{ (g_{i,j}^{(\ell_j)})^2}{w_{i,j}^2}
&= \sum_{i =1}^N \sum_{j=0}^k  \frac{ (g_{i,j}^{(\ell_j)})^2}{\varsigma+\sum_{q=0}^j (g_{i,q}^{(\ell_q)})^2} 
\leq \sum_{i =1}^N\log   \left( 1 + \frac{1}{\varsigma}\sum_{j=0}^k (g_{i,j}^{(\ell_j)})^2 \right)\\
& \leq \sum_{i =1}^N \log\left( 1 + \frac{1}{\varsigma}  \sum_{j=0}^k \|g_j^{(\ell_j)}\|^2 \right) =  2 N \log\left( \sqrt{1 + \frac{1}{\varsigma}  \sum_{j=0}^k \|g_j^{(\ell_j)}\|^2 } \right).
\end{aligned}
\eeqn

Combining (\ref{dec_grad_det}) and (\ref{use_lemma_det}), gives
\beqn{final_grad_case1_det}
\sum_{j=0}^k\sum_{i =1}^N \frac{ (g_{i,j}^{(\ell_j)})^2}{ w_{i,j}} \leq \Gamma_0 + N\Bar{L}\log\left( \sqrt{1 + \frac{1}{\varsigma}  \sum_{j=0}^k \|g_j^{(\ell_j)}\|^2 } \right).
\eeqn
We now focus on the left hand side introducing the lower bound provided by Lemma~\ref{lem:first_sum}, which gives: 
\begin{equation*}
     \sqrt{\varsigma}  \sqrt{ 1 + \frac{1}{\varsigma}  \sum_{j=0}^k \|g_j^{(\ell_j)}\|^2 } \leq  \sqrt{\varsigma} + \Gamma_0 + N\Bar{L}\log\left( \sqrt{1 + \frac{1}{\varsigma}  \sum_{j=0}^k \|g_j^{(\ell_j)}\|^2 } \right) .
\end{equation*}
We can now apply Lemma~\ref{log_bound} with 
$$ u=   \sqrt{ 1 + \frac{1}{\varsigma}  \sum_{j=0}^k \|g_j^{(\ell_j)}\|^2 }, \quad \gamma_1= \sqrt{\varsigma}, \quad \gamma_2=N \Bar{L}, \quad \gamma_3= \sqrt{\varsigma} +  \Gamma_0, $$
obtaining 
\begin{equation}
     \sqrt{ 1 + \frac{1}{\varsigma}  \sum_{j=0}^k \|g_j^{(\ell_j)}\|^2 }  \leq \frac{2(\sqrt{\varsigma}+\Gamma_0)}{\sqrt{\varsigma}}+\frac{2N\Bar{L}}{\sqrt{\varsigma}}\left[ \log\left( \frac{2N\Bar{L}}{\sqrt{\varsigma}} \right) -1\right].
\end{equation}
Simple calculations yields
\begin{equation}
     \sqrt{ \sum_{j=0}^k \lVert g_{j}^{(\ell_j)} \rVert^2}  \leq  \sqrt{\varsigma}   \sqrt{ 1 + \frac{1}{\varsigma}  \sum_{j=0}^k \|g_j^{(\ell_j)}\|^2 }   \leq 2(\sqrt{\varsigma}+\Gamma_0) + 2N\Bar{L}\left[ \log\left( \frac{2N\Bar{L}}{\sqrt{\varsigma}} \right) -1\right].
     \label{eq:bound_glj}
\end{equation}
Let $\Theta=2\sqrt{\varsigma}+2\Gamma_0 + 2N\Bar{L}\left[ \log\left( \frac{2N\Bar{L}}{\sqrt{\varsigma}} \right) -1\right]$, dividing by $\sqrt{k+1}$, and using the inequality
\begin{equation}
    \frac{1}{k+1}\sum_{j=0}^k \lVert g_j^{(\ell_j)} \rVert \leq  \frac{1}{\sqrt{k+1}} \sqrt{\sum_{j=0}^k \lVert g_j^{(\ell_j)} \rVert^2},
    \label{eq:square_to_norm}
\end{equation}
we have
\begin{equation}
     \average_{j \in \{0,\dots,k\}} \lVert g_j^{(\ell_j)} \rVert \leq   \sqrt{\average_{j \in \{0,\dots,k\}} \lVert g_j^{(\ell_j)} \rVert^2} \leq \frac{ \Theta}{\sqrt{k+1}}.
    \label{eq:g_gR}
\end{equation}
The next step is to derive a bound for the full gradient norm rather than the norm of the block gradient. This bound depends on the block selection criteria. Thus, we now consider two different cases corresponding to the GS rule or the Cyclic rule.

\textbf{Case 1: GS.} At every iteration, the block $b_{\ell_j}$ corresponding to the largest gradient norm is chosen; thus, it trivially follows that for every $j$ 
\begin{equation}
     \lVert g_j \rVert = \sqrt{ \sum_{\ell=1}^d \lVert g_j^{(\ell)} \rVert^2} \leq \sqrt{ d  \lVert g_j^{(\ell_j)} \rVert^2}= \sqrt{d} \lVert g_j^{(\ell_j)} \rVert.
     \label{eq:GS_bound_grad}
\end{equation}
Thus, from~\eqref{eq:g_gR},
\begin{equation}
\average_{j\in\iiz{k}} \|g_j\| \leq \sqrt{d} \ \average_{j\in\iiz{k}} \lVert g_j^{(\ell_j)}\rVert\leq \frac{\sqrt{d} \Theta}{k+1},
\label{avg_case1_1_det}
\end{equation}
hence obtaining~\eqref{gradbound_det}.

\textbf{Case 2: C.} Let us first observe that in the cyclic rule, every $d$ iterations, all the blocks are updated the same number of times. In order to simplify the passages that follow, we reparametrize the iteration counter by grouping iterations into groups of $d$ elements. Equivalently, we write $k=Td+\ell$, where $\ell=0,\dots,d-1$. Consequently, all the components in $x_{td}$, for  $t=0,1,\dots, T$, have been updated the same number of times. Following the same reasoning presented in~\cite[Lemma 3.3]{beck2013convergence}, it holds the following
\begin{equation}
    x_{td+\ell} =x_{td}- \sum_{s=1}^{\ell-1} \frac{g^{(s)}_{td+s}}{w_{td+s}}.
    \label{eq:cum_xtd}
\end{equation}
Using~\eqref{eq:cum_xtd} and the Lipschitz continuity of $f$ in AS.4, the fact that $\varsigma \leq w_{td+\ell}$ for every $t$ and $\ell$, we have 
\begin{equation}
\begin{aligned}
     \lVert g_{td}^{(\ell)} \rVert^2 & \leq  \left( \lVert g_{td}^{(\ell)} -  g_{td+\ell}^{(\ell)}\rVert + \lVert g_{td+\ell}^{(\ell)} \rVert \right)^2 \leq 2  \lVert g_{td}^{(\ell)} -  g_{td+\ell}^{(\ell)} \rVert^2 + 2\lVert g_{td+\ell}^{(\ell)} \rVert^2\\
     &\leq  2  \lVert g_{td} -  g_{td+\ell}\rVert^2 + 2\lVert g_{td+\ell}^{(\ell)} \rVert^2 \leq 2L^2 \lVert x_{td} -  x_{td+\ell}\rVert^2 + 2\lVert g_{td+\ell}^{(\ell)} \rVert^2\\
     &\leq 2 L^2 \left\lVert \sum_{s=1}^{\ell-1} \frac{g^{(s)}_{td+s}}{w_{td+s}} \right\rVert^2 + 2\lVert g_{td+\ell}^{(\ell)} \rVert^2 \leq 2L^2 \sum_{s=1}^{\ell-1} \left\lVert \frac{g^{(s)}_{td+s}}{w_{td+s}} \right\rVert^2 + 2\lVert g_{td+\ell}^{(\ell)} \rVert^2\\
     & \leq \frac{2 L^2}{\varsigma^2} \sum_{s=1}^{\ell-1} \lVert g^{(s)}_{td+s} \rVert^2 + 2\lVert g_{td+\ell}^{(\ell)} \rVert^2. 
\end{aligned}
\label{eq:cyclic_eq}
\end{equation}

Then, summing $\ell=1,\dots,d$ and using~\eqref{eq:cyclic_eq}, we have

\begin{equation}
\begin{aligned}
    \lVert g_{td} \rVert^2 & = \sum_{\ell=1}^{d} \lVert g_{td}^{(\ell)} \rVert^2 \leq 2  \sum_{\ell=1}^{d} \left(  1 + (d-\ell)\frac{L^2}{\varsigma^2} \right) \lVert g_{td+\ell}^{(\ell)} \rVert^2.\\
    &\leq  2 \left(  1 + d\frac{L^2}{\varsigma^2} \right) \sum_{\ell=1}^{d} \lVert g_{td+\ell}^{(\ell)} \rVert^2.
\end{aligned}
\end{equation}

Summing over the number of cycles $t=0,1,\dots,T$, and since summing both for $t=0,1,\dots,T$ and for $\ell=0,1,\dots,d$ is equivalent to summing for $j=0,1,\dots,k$, then
\begin{equation}
     \sum_{t=0}^T \lVert g_{td} \rVert^2 
     \leq 2  \left(  1 + d\frac{L^2}{\varsigma^2} \right) \sum_{t=0}^T \sum_{\ell=1}^d \lVert g_{td+\ell}^{(\ell)} \rVert^2 = 2  \left(  1 + d\frac{L^2}{\varsigma^2} \right) \sum_{j=0}^k \lVert g_{j}^{(\ell_j)} \rVert^2,
     \label{eq:final_cyclic}
\end{equation}

which, taking the square root and using~\eqref{eq:square_to_norm}, gives
\begin{equation}
   \frac{1}{T+1} \sum_{t=0}^T \lVert g_{td} \rVert \leq \frac{1}{\sqrt{T+1}} \sqrt{\sum_{t=0}^T \lVert g_{td} \rVert^2} \leq \frac{1}{\sqrt{T+1}} \sqrt{2  \left(  1 + d\frac{L^2}{\varsigma^2} \right)\sum_{j=0}^k \lVert g_{j}^{(\ell_j)} \rVert^2}.
   \label{eq:bound_cyclic}
\end{equation}
Finally, combining~\eqref{eq:bound_cyclic} with~\eqref{eq:bound_glj}, we obtain

\begin{equation}
    \average_{t=0,\dots,T} \lVert g_{td} \rVert \leq   \sqrt{2  \left(  1 + d\frac{L^2}{\varsigma^2} \right)} \cdot  \frac{\Theta}{\sqrt{T+1}},
\label{eq:bound_1_cyclic}
\end{equation}
which is the thesis in~\eqref{eq:rate_cyclic}. 

\textbf{UR:} We now prove the rate of convergence of the \name\ algorithm when the UR rule is used.

Starting from~\eqref{gen-decr-det} and AS.3, summing for $j=0,\dots,k$, taking the expectation

\begin{equation}
    f(x_0)-\E \left[ \E_k \left[f(x_{k+1}) \right] \right]\geq 
\E \left[ \sum_{j=0}^k \E_j \left[\sum_{i=1}^N \frac{(g_{i,j}^{(\ell_j)})^2}{ w_{i,j}} \right] \right]
   - \frac{\Bar{L}}{2} \E \left[ \sum_{j=0}^k \E_j \left[ \sum_{i=1}^N \frac{ (g_{i,j}^{(\ell_j)})^2}{w_{i,j}^2} \right] \right],
   \label{gen-dec-k-det}
\end{equation}

 using the tower property, equation~\eqref{gen-dec-k-det} simplifies to

\begin{equation}
    \E\left[\sum_{j=0}^k\sum_{i =1}^N \frac{ (g_{i,j}^{(\ell_j)})^2}{ w_{i,j}} \right] \leq f(x_0)-f_{\text{low}}+\frac{\Bar{L}}{2} \E\left[ \sum_{j=0}^k \sum_{i =1}^N \frac{ (g_{i,j}^{(\ell_j)})^2}{ w_{i,j}^2} \right].
    \label{eq:start_stoch}
\end{equation}

We provide an upper bound for the right hand side in~\eqref{eq:start_stoch} by using the reformulation of $w_{i,j}$ in~\eqref{eq:explicit_weights} and Lemma \ref{gen:series} with $c_{j}=(g_{i,j}^{(\ell_j)})^2$. This bound is derived similarly to the one in~\eqref{use_lemma_det}, that is

\beqn{final_grad}
\begin{aligned}
 \E\left[\sum_{j=0}^k\sum_{i =1}^N \frac{ (g_{i,j}^{(\ell_j)})^2}{ w_{i,j}} \right] & \leq  \Gamma_0+ \frac{N \Bar{L}}{2}  \E \left[ \log\left( 1 + \frac{1}{\varsigma}  \sum_{j=0}^k \|g_j^{(\ell_j)}\|^2 \right) \right],\\
 & = \Gamma_0+ N \Bar{L}  \E \left[ \log\left( \sqrt{ 1 + \frac{1}{\varsigma}  \sum_{j=0}^k \|g_j^{(\ell_j)}\|^2 } \right) \right],\\
 & \leq \Gamma_0+ N \Bar{L} \log\left( \E \left[  \sqrt{ 1 + \frac{1}{\varsigma}  \sum_{j=0}^k \|g_j^{(\ell_j)}\|^2 }  \right] \right) ,\\
 \end{aligned}
\eeqn
where we used the concavity of the logarithm and Jensen inequality to derive the last bound.

Using Lemma~\ref{lem:first_sum} combined with~\eqref{final_grad},
\begin{equation*}
   \sqrt{\varsigma} \E \left[  \sqrt{ 1 + \frac{1}{\varsigma}  \sum_{j=0}^k \|g_j^{(\ell_j)}\|^2 }  \right] \leq \sqrt{\varsigma} +  \Gamma_0+ N \Bar{L} \log\left( \E \left[  \sqrt{ 1 + \frac{1}{\varsigma}  \sum_{j=0}^k \|g_j^{(\ell_j)}\|^2 }  \right] \right).
\end{equation*}

Similarly to the GS and cyclic case we can apply Lemma~\ref{log_bound} with 
$$ u= \E \left[  \sqrt{ 1 + \frac{1}{\varsigma}  \sum_{j=0}^k \|g_j^{(\ell_j)}\|^2 }  \right], \quad \gamma_1= \sqrt{\varsigma}, \quad \gamma_2=N \Bar{L}, \quad \gamma_3= \sqrt{\varsigma} +  \Gamma_0, $$
leading to 
\begin{equation}
    \E \left[  \sqrt{ 1 + \frac{1}{\varsigma}  \sum_{j=0}^k \|g_j^{(\ell_j)}\|^2 }  \right] \leq \frac{2(\sqrt{\varsigma}+\Gamma_0)}{\sqrt{\varsigma}}+\frac{2N\Bar{L}}{\sqrt{\varsigma}}\left[ \log\left( \frac{2N\Bar{L}}{\sqrt{\varsigma}} \right) -1\right],
\end{equation}

thus resulting in
\begin{equation}
    \E \left[ \sqrt{ \sum_{j=0}^k \lVert g_{j}^{(\ell_j)} \rVert^2} \right] \leq  \sqrt{\varsigma} \E \left[  \sqrt{ 1 + \frac{1}{\varsigma}  \sum_{j=0}^k \|g_j^{(\ell_j)}\|^2 }  \right] \leq 2(\sqrt{\varsigma}+\Gamma_0) + 2N\Bar{L}\left[ \log\left( \frac{2N\Bar{L}}{\sqrt{\varsigma}} \right) -1\right]=\Theta.
\end{equation}

Using~\eqref{eq:square_to_norm} again 
\begin{equation}
    \E\left[ \average_{j \in \{0,\dots,k\}} \lVert g_j^{(\ell_j)} \rVert\right] \leq  \E\left[ \sqrt{\average_{j \in \{0,\dots,k\}} \lVert g_j^{(\ell_j)} \rVert^2}\right] \leq \frac{ \Theta}{\sqrt{k+1}}.
    \label{eq:g_gR_stoch}
\end{equation}
Now we need to derive a bound for the norm of the whole gradient $g_j$.
Therefore, using~\eqref{eq:unbias} and the tower property, we have
\begin{equation}
\begin{aligned}
     \E\left[ \average_{j \in \{0,\dots,k\}} \lVert g_j^{(\ell_j)} \rVert\right]& =\frac{1}{k+1}\sum_{j=0}^k \E[ \E_j[\lVert g_j^{(\ell_j)} \rVert]] = \frac{1}{d(k+1)}\sum_{j=0}^k \E \left[ \sum_{\ell=1}^d \lVert g_j^{(\ell)} \rVert \right] \\
     & \geq  \frac{1}{d(k+1)}\sum_{j=0}^k \E \left[ \left\lVert  \sum_{\ell=1}^d g_j^{(\ell)} \right\rVert \right]  = \frac{1}{d(k+1)}\sum_{j=0}^k \E \left[ \lVert g_j \rVert \right]  = \frac{1}{d} \E \left[ \average_{j \in \{0,\dots,k\}} \lVert g_j \rVert  \right]
\end{aligned}
\label{eq:final_ineq_gj}
\end{equation}
Finally, combining~\eqref{eq:g_gR_stoch} with~\eqref{eq:final_ineq_gj}, we obtain~\eqref{gradbound}, that is, 
\begin{equation*}
     \E \left[ \average_{j \in \{0,\dots,k\}} \lVert g_j \rVert  \right] \leq  \frac{d \Theta}{\sqrt{k+1}}.
\end{equation*}

hence concluding the proof.
}

Let us comment the convergence of \name. 

\begin{enumerate}  
\item Similarly to the original AdaGrad algorithm, the nonmonotone behavior of \name\ can be deduced from Lemma~\ref{lemma:dec_bAdaGrad}. In fact, if the second summation in~\eqref{gen-decr-det-gs-c} or~\eqref{gen-decr-det} is larger than the first, then the function increases from iteration $j$ to iteration $j+1$. However, since the second term quadratically decreases with the optimization weights $w_{i,j}$, while the first term depends linearly on the weights, we expect the first to dominate the second in later stages of the algorithm. This behavior is consistently different from the majority of BCG methods in the literature, where constant step sizes or line-search are used. In fact, they usually imply a sufficient decrease of the function and result in monotonic algorithms. The nonmonotone behavior makes \name\ very flexible but it also results in weaker convergence guarantees, see the discussion below for more details.

\item While both the rates for random and greedy BCG depend on the total number of iterations $k$, the rate of cyclic BCG is given in terms of full number of cycles $T$ after which all the variables are updated. One can deduce that in a situation where the number of blocks $d$ is large, $T$ grows slower than $k$. Thus, our result for cyclic BCD is weaker than those for random and greedy BCG. Nevertheless, proving convergence rates depending on the full cycles $T$ is common practice when the cyclic block selection rule is used, see for example~\cite{beck2013convergence,cai2023cyclic}. 

\item Our bounds have an explicit dependence  on the number of blocks $d$. Ideally, one would like to remove it since it impacts the comparison between BCG and standard gradient method in favor of non-block approach. One possible strategy in random BCG, it has been shown that choosing the block according to a probability $p_\ell$ depending on the block Lipschitz constant $L_\ell$ achieves the goal~\cite{nesterov2012efficiency,allen2016even,nesterov2017efficiency}. However, in our setting, the Lipschitz constants are considered to be unknown. Alternatively, on might consider introducing alternative Lipschitz condition such as that proposed in~\cite{song2023cyclic,cai2023cyclic,wei2026adaptive} to achieve this goal for the C rule. We regard exploring this possibility in the \name\ framework as future research.

\item The constant $\Theta$ that appears in our bound also depends on the variable dimension $N$. This dependency is due to the coordinatewise structure of \name\ which, at each iteration, adaptively chooses a different step size for the update of each component in the selected block. In particular, the constant $N$ comes from the strategy we used in~\eqref{use_lemma_det} to bound the sum of quadratic terms in the descent lemma. We believe that the factor $N$ in the estimate might be improved, at least to the size of the largest block, but it is not clear how to achieve this goal. 
   
\end{enumerate}

 \subsection{Comparison with previous works}
 \label{sub:comp_works}
The first difference between this work and those closely related in Table~\ref{tab:rel_work_nonconvex} is that we provide bounds on the average gradient norm (ergodic rate) that are slightly stronger than those in the minimum gradient norm typically established in the contributions investigating similar smooth and nonconvex settings.
Nevertheless, convergence rates expressed in terms of the minimum or the average norm are of a similar nature and constitute a standard technique in nonconvex optimization and ergodic results are widely used in the analysis of AdaGrad-type algorithms, see~\cite{gratton2022parametric,gratton2024complexity,gratton2025complexity,porcelli2025prunadag}. We also express our bounds in term of the gradient norm instead of the gradient norm square as done in the majority of the contributions in Table~\ref{tab:rel_work_nonconvex}. To favor the comparison between the convergence rates analyzed in this section, we rewrite all the bounds in terms of the gradient norms.

Moreover, no sufficient decrease is imposed in our setting, and the block Lipschitz constants are assumed to be unknown, as it occurs in several practical applications. This yields larger constants in the convergence rate for \name\ with respect to BCG where the step size is constant or computed by a line-search. Nevertheless, \name\ remains more flexible since it does not use the block-Lipschitz constants or any function evaluations to choose the step size at each iteration. 

We now compare the rate of \name\ with those in the literature in more detail. Starting from randomized BCG, our result is similar to that proposed by Patrascu and Necoara~\cite[Corollary~1]{patrascu2015efficient} for constant step size BCG, that is, \begin{equation}
    \min_{j \in \{0,\dots,k\}}  \lVert g_j \rVert_L^*  \leq \sqrt{ \frac{2d (f(x^0)-f^*)}{k+1}}, \quad \text{where } \quad \lVert g_j \rVert_L^*=\sqrt{\sum_{\ell=1}^d \frac{\lVert g_j^{(\ell)} \rVert^2}{L_\ell}}, 
    \label{eq:pat_nec}
\end{equation} 
and $f^*$ denotes the optimal function value. Their constant in the bound is smaller than ours, and they prove a $\sqrt{d}$ dependence on the number of blocks $d$, while our rate involves a larger factor  $d$. For greedy BCG, Nutini and co-authors~\cite{nutini2015coordinate} consider a similar setting to the one analyzed in this paper. Their method uses an update, referred to as matrix update, of the form $x^{k+1}=x^k- H_{\ell_k}^{-1} g_k^{(\ell_k)}$, where $H_k$ is a positive definite matrix that represents an upper bound for the Hessian of $f$ in the twice differentiable case. However, constructing $H_{\ell_k}$ is not trivial in general and is problem dependent, making this approach less flexible. Nevertheless, convergence is established as:
\begin{equation}
    \min_{j \in \{0,\dots,k\}}\max_{\ell=1,\dots,d} \lVert g_j^{\ell} \rVert_{H_\ell^{-1}} \leq \sqrt{2 \frac{f(x^0)-f^*}{k+1}}, \quad \text{where } \quad \lVert \cdot \rVert_H=\sqrt{ \langle H \cdot, \cdot \rangle}.
    \label{eq:nutini_bound}
\end{equation}
Despite considering the standard Euclidean norm, the bound in this paper is similar to the one in~\eqref{eq:nutini_bound}. In fact, both the bounds present a linear dependence on the number of blocks $d$ and using the fact that $\lVert g_j \rVert^2 \leq d \max_{\ell=1,\dots,d} \lVert g_j^{\ell} \rVert^2$ we can deduce from~\eqref{eq:nutini_bound} a bound on the full gradient norm, similar to the one proposed in Theorem~\ref{theorem:bAdag_GS}. Nevertheless, the constant in our bound still remains larger since it involves an additional term due to the nonmonotone behavior of the algorithm.

In addition, for cyclic BCG, the contribution that is closer to our nonconvex setting is that by Cai et al.~\cite{cai2023cyclic}. Specifically, they analyze a composite (smooth + nonsmooth) minimization problem, where the smooth term is assumed to be a finite sum of functions and the nonsmooth term is convex and separable. Therefore, their convergence results are given in terms of the distance of the subgradient from zero. We readapt it here to the smooth case, obtaining:
$$\min_{j \in \{0,\dots,k\}}  \lVert g_j \rVert \leq \sqrt{4(1+ \Hat{L}) \frac{(f(x^0)-f^*)}{T+1}},$$ 
where $\Hat{L}$ is a generalized notion of a global Lipschitz constant with respect to a Mahalanobis
norm; we refer to~\cite{cai2023cyclic} for more details. Contrary to our bound, the constant of Cai et al. has the advantage of being independent of the number of blocks. Nevertheless, when only smooth functions are considered, their finite sum setting is more restrictive than ours which includes general convex functions. We also mention that the dependence on the number of blocks $d$ in our bound is similar to that proposed by Beck and Tetruashvili in~\cite{beck2013convergence} for convex, smooth functions. 

Finally, the \name\ algorithm includes the original AdaGrad as a special case when the number of blocks $d$ is equal to 1. In fact, for greedy and random BCG, setting $d=1$ in the convergence rates we recover a similar bound to that of the AdaGrad-like method presented in~\cite{bellavia2025fast}. For cyclic BCG, our result is weaker since for $d=1$, our bound has an additional factor $1+L/\varsigma$ which does not appear in the original method. The same observation is discussed in~\cite{beck2013convergence}, comparing their cyclic BCG with the standard gradient method in the convex case.  

\section{Extension to box-constrained problems}
\label{sec:constrained}
We now extend \name\ to box constrained optimization. Thus, we consider the following problem:
\begin{equation}
    \min_x f(x) \quad \text{subject to} \quad x \in C=\{ x \in \mathbb{R}^N \ | \ a_i \leq x_i \leq u_i \},
    \label{eq:const_prob}
\end{equation}
where $a_i, u_i \in \mathbb{R}\cup \{-\infty; \infty\}$. Therefore, when $a_i=-\infty$ and $u_i=\infty$, the unconstrained setting is recovered as a special case. We also denote $P_C$ the projection into the set $C$, which is easy to compute since $[P_C(x)]_i=P_{[a_i,u_i]}(x_i)=\max(a_i,\min(u_i,x_i))$. In what follows, we adapt the \name\ algorithm to this more general setting, extending the idea of Bellavia et al. in~\cite{bellavia2025fast}, where they consider stochastic AdaGrad for box-constrained problems.

\newpage
Let us introduce the box-constrained version of \name\ in Algorithm~\ref{alg:bAdaGrad_b}.
\algo{alg:bAdaGrad_b}{\tal{\namec}}
{
\begin{description}
\item[Step 0: Initialization. ]
  Bound constrained vectors $a, u \in \mathbb{R}^N$, a starting point $x_0 \in C$, a constant $\varsigma\in(0,1)$, and the initial weight vector $w_{i,-1}=\sqrt{\varsigma}$ for $i=1,\dots,N$, a fixed partition $\{1,\dots,N\} = \bigcup_{\ell=1}^d b_\ell$. Set $k=0$. 

\item[Step 1: Choose the block.] Select the block $b_{\ell_k} \subseteq \{1,\dots,N\}$ to update according to the GS, UR or C rule.
 \item[Step 2: Gradient] If not already computed in Step 1, compute the gradient $g_{i,k}$ for $i \in b_{\ell_k}$.
 \item[Step 3: Measure of optimality.  ] Compute
 \begin{equation*}
 \begin{aligned}
     &v_{i,k}=P_{[a_i,u_i]}(x_{i,k}-g_{i,k})-x_{i,k}\quad \quad (i \in b_{\ell_k}),\\
     &v_{i,k}=0 \quad \quad (i \in \{1,\dots,N\} \backslash b_{\ell_k}). 
 \end{aligned}
 \end{equation*}
  \item[Step 4: Optimization weights.] Compute 
  \beqn{weig_R_det_b}
  \begin{aligned}
      & \eta_{i,k} = \sqrt{(\eta_{i,k-1})^2+v_{i,k}^2}
      \quad \quad (i \in b_{\ell_k}),\\
      & \eta_{i,k}=\eta_{i,k-1} \quad \text{ and } \quad \quad (i \in \{1,\dots,N\} \backslash b_{\ell_k}).
  \end{aligned}
   \eeqn
   

\item[Step 6: Compute the step.] Set 
  \beqn{step_R_det_b}
  \begin{aligned}
       s_{i,k}&=\frac{ v_{i,k}}{\max(1,\eta_{i,k})}  \quad \quad (i \in b_{\ell_k}),\\
       s_{i,k}&=0 \quad \quad (i \in \{1,\dots,N\} \backslash b_{\ell_k}).
  \end{aligned}
  \eeqn


\item[Step 7: New iterate.] Define
   \begin{equation*}
       \qquad x_{k+1} = x_k + s_k, 
       \label{eq:iter_update_b}
   \end{equation*}  
    increment $k$ by one and return to Step~1.
\end{description}
}
Let us first introduce the direction we use for the update of the $i$-th component of the $k$th-iterate:
\begin{equation*}
     v_{i,k} = P_{[a_i,u_i]}(x_{i,k}-g_{i,k})-x_{i,k}, \quad \text{ for } \quad i\in b_{\ell_k}.
\end{equation*}
The vector $v_k$ is also used to update the new adaptive weights $\eta_k$ of the algorithm. In fact, $\lVert v_k \rVert$ is a standard optimality measure in constrained, first order optimization, see~\cite{conn2000trust}. Of course, if there are no constraints ($C=\mathbb{R}^N$), then $\lVert v_k \rVert=\lVert g_k \rVert$. In particular, this change in the optimality measure also affects the GS-type block selection rule in the box-constrained case, which is still denoted Gauss-Southwell for simplicity, and, at iteration $k$, selects the block $b_{\ell_k}$ associated to the largest norm $\lVert v_k^{(\ell)}\rVert$ for $\ell=1,\dots,d$.

We observe that the updates in Algorithm~\ref{alg:bAdaGrad_b} remain feasible within the iterations. We show it by distinguishing between two cases based on the value of $s_{i,k}$ in~\eqref{step_R_det_b}. If $s_{i,k}=v_{i,k}$,  it holds for every $i \in b_{\ell_k}$ that $x_{i,k+1}=P_{[a_i,u_i]}(x_{i,k}-g_{i,k})$ which is trivially feasible. On the other hand, if $s_{i,k}=v_{i,k}/\eta_{i,k}$ with $\eta_{i,k}>1$, then
\begin{equation*}
    x_{i,k+1}=x_{i,k}+\frac{v_{i,k}}{\eta_{i,k}}=\left(1-\frac{1}{\eta_{i,k}}\right)x_{i,k}+\frac{1}{\eta_{i,k}}P_{[a_i,u_i]}(x_{i,k}-g_{i,k}), \qquad \text{for }i \in b_{\ell_k},
\end{equation*}
which is a convex combination of points in the interval $[a_i,u_i]$, provided that the first iterate $x_0$ is in the feasible set $C$; thus, it is feasible. We used the property of box constraint to be convex in each component, which is crucial due to the coordinatewise nature of \namec\ algorithm. 

Moreover, we clarify that when $a_i=-\infty$ and $u_i=\infty$, Algorithm~\ref{alg:bAdaGrad_b} results in a slightly more conservative version of the \name\ algorithm, where the step size is set to the smaller value between 1 and $1/\eta_{i,k}$ for every $i \in b_{\ell_k}.$ Nevertheless, as the discussion above confirms, the modification in the choice of the step size in~\eqref{step_R_det_b} is necessary to ensure the feasibility of the iterate in the box-constrained case.

We now introduce the descent lemma for Algorithm~\ref{alg:bAdaGrad_b}. Similarly to Lemma~\ref{lemma:dec_bAdaGrad}, we prove the result just for the UR case since those for GS and C rule follow by removing the expectation in all the computations. 

\begin{lemma}
    \label{lemma:dec_bAdaGrad_b}
    Suppose that AS.1 - AS.3  hold. If Algorithm~\ref{alg:bAdaGrad_b} is applied to problem~\eqref{eq:const_prob}, then, according to the block selection rules GS, UR, and C, we have that, for all $j\ge0$,
\begin{equation}
  \text{\textbf{(GS/C) :}} \quad    f(x_{j+1})
\le f(x_j) -\varsigma \sum_{i=1}^N \frac{ (v_{i,j}^{(\ell_j)})^2}{ \eta_{i,j}}
     + \frac{L_{\ell_j}}{2} \sum_{i=1}^N \frac{ (v_{i,j}^{(\ell_j)})^2}{\eta_{i,j}^2}, 
      \label{eq:decr_f_det_gs_c} 
\end{equation}
\begin{equation}
    \text{\textbf{(UR) :}} \quad  \E_j\left[f(x_{j+1})\right]
\leq f(x_j) -\varsigma \E_j\left[\sum_{i=1}^N \frac{ (v_{i,j}^{(\ell_j)})^2}{ \eta_{i,j}} \right]
      +\frac{L_{\ell_j}}{2}  \E_j\left[ \sum_{i=1}^N \frac{ (v_{i,j}^{(\ell_j)})^2}{\eta_{i,j}^2} \right]. 
    \label{gen_decr_ur}
\end{equation}
\end{lemma}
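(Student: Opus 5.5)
The plan mirrors the proof of Lemma~\ref{lemma:dec_bAdaGrad}. I would apply the block Taylor bound \eqref{eq:taylor_app} to the step $s_j$ of Algorithm~\ref{alg:bAdaGrad_b}, whose support lies in $b_{\ell_j}$, and then bound the linear and quadratic terms componentwise. As in the paper, I would write everything for the UR rule with conditional expectations $\E_j$; the GS and C cases follow by dropping the expectations. Concretely, \eqref{eq:taylor_app} gives
\begin{equation*}
\E_j[f(x_{j+1})] \le f(x_j) + \E_j\Big[\sum_{i\in b_{\ell_j}} g_{i,j}\, s_{i,j}\Big] + \frac{L_{\ell_j}}{2}\,\E_j\Big[\sum_{i\in b_{\ell_j}} s_{i,j}^2\Big],
\end{equation*}
with $s_{i,j} = v_{i,j}/\max(1,\eta_{i,j})$.

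\textbf{Linear term.} The key ingredient is the variational characterization of the projection onto the interval $[a_i,u_i]$. Since $x_{i,j}\in[a_i,u_i]$ (feasibility of the iterates was shown above), we have
\begin{equation*}
\big((x_{i,j}-g_{i,j}) - P_{[a_i,u_i]}(x_{i,j}-g_{i,j})\big)\big(x_{i,j}-P_{[a_i,u_i]}(x_{i,j}-g_{i,j})\big)\le 0 .
\end{equation*}
Substituting $v_{i,j}=P_{[a_i,u_i]}(x_{i,j}-g_{i,j})-x_{i,j}$, this reads $(g_{i,j}+v_{i,j})\,v_{i,j}\le 0$, that is, $g_{i,j}v_{i,j}\le -v_{i,j}^2$. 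Hence
\begin{equation*}
g_{i,j}s_{i,j}\le -\frac{v_{i,j}^2}{\max(1,\eta_{i,j})}.
\end{equation*}
It then remains to show $1/\max(1,\eta_{i,j}) \ge \varsigma/\eta_{i,j}$, which I would do by cases.
\begin{itemize}
\item If $\eta_{i,j}\ge 1$, the claim holds because $\varsigma<1$.
\item If $\eta_{i,j}<1$, it is equivalent to $\eta_{i,j}\ge\varsigma$. This follows from $\eta_{i,j}\ge\eta_{i,-1}=\sqrt{\varsigma}\ge\varsigma$, using that the weights are nondecreasing and $\varsigma\in(0,1)$. Here I take the natural initialization $\eta_{i,-1}=\sqrt{\varsigma}$, analogous to $w_{i,-1}$ in Algorithm~\ref{alg:bAdaGrad}.
\end{itemize}

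\textbf{Quadratic term.} I would show $s_{i,j}^2 = v_{i,j}^2/\max(1,\eta_{i,j})^2\le v_{i,j}^2/\eta_{i,j}^2$, again by cases. When $\eta_{i,j}\ge1$ the two sides are equal. When $\eta_{i,j}<1$ the left side is $v_{i,j}^2\le v_{i,j}^2/\eta_{i,j}^2$.

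\textbf{Conclusion and main obstacle.} Finally, since $v_{i,j}$ vanishes outside $b_{\ell_j}$, I would rewrite the sums over $b_{\ell_j}$ as sums over $i=1,\dots,N$ of $(v_{i,j}^{(\ell_j)})^2$. Inserting both bounds inside the conditional expectation yields \eqref{gen_decr_ur}, and removing expectations gives \eqref{eq:decr_f_det_gs_c}. The only nonroutine step is the projection inequality $g_{i,j}v_{i,j}\le -v_{i,j}^2$, which relies on the feasibility of $x_j$. The role of the factor $\varsigma$ is simply to absorb the $\max(1,\cdot)$ safeguard through the lower bound $\eta_{i,j}\ge\sqrt{\varsigma}$.
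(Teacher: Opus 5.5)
Your proof is correct and follows the paper's argument: you apply the block Taylor bound, use a componentwise case split on $\max(1,\eta_{i,j})$ to get $g_{i,j}s_{i,j}\le-\varsigma v_{i,j}^2/\eta_{i,j}$, and bound $s_{i,j}^2\le v_{i,j}^2/\eta_{i,j}^2$. The differences are minor. You obtain $g_{i,j}v_{i,j}\le -v_{i,j}^2$ in one step from the projection's variational inequality, whereas the paper combines nonexpansiveness ($|v_{i,j}|\le|g_{i,j}|$) with the sign observation $g_{i,j}s_{i,j}\le 0$; and you make explicit the bound $\eta_{i,j}\ge\sqrt{\varsigma}\ge\varsigma$ (via the initialization $\eta_{i,-1}=\sqrt{\varsigma}$, which the algorithm leaves unstated), which the paper's first case uses tacitly.
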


\proof{
Let us first observe that the non-expansiveness of the projection operator yields the inequality $  |g_{i,j}| \geq |v_{i,j}|$. Therefore, we distinguish between two disjoint cases, based on the value of the step $s_{i,k}$. In the first case ($s_{i,k}=v_{i,k}$), we have for every $i \in b_{\ell_k}$
\begin{equation}
    \lvert g_{i,j} s_{i,j} \rvert \geq  v_{i,j}^2= \eta_{i,j} \frac{v_{i,j}^2}{\eta_{i,j}}\geq \varsigma \frac{v_{i,j}^2}{\eta_{i,j}}.
    \label{eq:first_step_b}
\end{equation}
In the second case ($s_{i,k}=v_{i,k}/\eta_{i,j}$), for every $i \in b_{\ell_k}$, it holds $\lvert g_{i,j} s_{i,j} \rvert \geq  \frac{v_{i,j}^2}{ \eta_{i,j}}$. Thus, recalling that $\varsigma \in (0,1]$, and taking the minimum with~\eqref{eq:first_step_b}, we get $ \lvert g_{i,j} s_{i,j} \rvert \geq \varsigma(  v_{i,j}^2/\eta_{i,j}).$ Since, by construction, $g_{i,j} s_{i,j} \leq 0$,  we finally have
\begin{equation}
     g_{i,j} s_{i,j}  \leq - \varsigma \frac{  v_{i,j}^2}{\eta_{i,j}}.
    \label{eq:second_step_b}
\end{equation}
 
In addition, it trivially follows from~\eqref{weig_R_det_b} and~\eqref{step_R_det_b} that $s_{i,k}^2 \leq v_{i,k}^2/\eta_{i,k}^2$ 

Using AS.3 and the observation in~\eqref{eq:taylor_app}

\begin{equation*}
\begin{aligned}
\E_j\left[f(x_{j+1}) \right]&\leq  f(x_j)+ \E_j\left[ g_j^T s_j \right]+  \frac{L_{\ell_j}}{2} \E_j\left[ ||s_j||^2 \right] \\
&= f(x_j)+ \E_j\left[ \sum_{i\in b_{\ell_j}} g_{i,j}s_{i,j} \right]+  \frac{L_{\ell_j}}{2} \E_j\left[ \sum_{i\in b_{\ell_j}}s_{i,j}^2 \right] \\
&\leq f(x_j) - \varsigma \E_j\left[ \sum_{i \in b_{\ell_j}} \frac{ v_{i,j}^2}{ \eta_{i,j}} \right]
      +\frac{L_{\ell_j}}{2} \E_j\left[ \sum_{i \in b_{\ell_j}} \frac{ v_{i,j}^2}{\eta_{i,j}^2} \right]\\
&=f(x_j) - \varsigma \E_j\left[ \sum_{i = 1}^N \frac{ (v_{i,j}^{(\ell_j)})^2}{ \eta_{i,j}} \right]
     + \frac{L_{\ell_j}}{2} \E_j\left[ \sum_{i=1}^N \frac{ (v_{i,j}^{(\ell_j)})^2}{ \eta_{i,j}^2}      \right],
\end{aligned}
\end{equation*}
 proving the lemma.
}

We now present our non-asymptotic convergence result for Algorithm~\ref{alg:bAdaGrad_b}, covering the GS, UR and C block selection rules. The proof is omitted since it follows directly from the new descent Lemma~\ref{lemma:dec_bAdaGrad_b}, by applying the same strategy used in Theorem~\ref{theorem:bAdag_GS}. The only difference is that the bounds are derived in terms of the suitable optimality measure $\lVert v_j \rVert$ for the constrained setting, rather than the standard gradient norm. While the extension for the GS and UR rules is straightforward, we show in the Appendix (Lemma~\ref{lem:append_bound}) how to derive an upper bound on the average norm of $v_j$ in terms of the average norm of $v_j^{(\ell_j)}$, when the C rule is employed.

\begin{theorem}
    \label{theorem:bAdag_GS_b}
    Suppose that AS.1--AS.3 hold and that the
Algorithm~\ref{alg:bAdaGrad_b} is applied to problem \req{eq:const_prob}.
Let $\varsigma>0$ and $d$ the number of blocks. Defining
$\Gamma_0 \eqdef f(x_0)-\flow,$
\begin{itemize}
    \item \textbf{GS:}
    \begin{equation}
        \average_{j\in\iiz{k}}\|v_j\| \le \sqrt{d} \frac{\Theta_c }{\sqrt{k+1}}.
        \label{gradbound_det_b}
    \end{equation}
    \item \textbf{C:} if, in addition, AS.4 holds, then
    \begin{equation}
        \average_{t \in \{0,\dots,T\}} \lVert v_{td} \rVert \leq  \sqrt{2  \left(  1 + d\frac{6+4L^2}{\varsigma^2} \right)} \frac{\Theta_c }{\sqrt{T+1}}.
        \label{eq:rate_cyclic_b}
    \end{equation}
    \item \textbf{UR:}
    \begin{equation}
        \E \left[ \average_{j\in\iiz{k}}\|v_j\| \right] \le d \frac{  \Theta_c}{\sqrt{k+1}},
        \label{gradbound_b}
    \end{equation}
\end{itemize}

with \begin{equation}
    \label{k6-def_b}
     \Theta_c \eqdef \frac{2(\Gamma_0 +\varsigma^{\frac{3}{2}})}{\varsigma} + \frac{2N\Bar{L}}{\varsigma}\left[ \log\left( \frac{2N\Bar{L}}{\varsigma^{3/2}} \right) -1\right],
\end{equation} 

where $\Bar{L}=\max_{\ell=1,\dots,d}L_\ell$.
\end{theorem}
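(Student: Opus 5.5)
The plan is to repeat the proof of Theorem~\ref{theorem:bAdag_GS} with $v$ and $\eta$ in place of $g$ and $w$, starting from Lemma~\ref{lemma:dec_bAdaGrad_b}. Note that the weights satisfy $\eta_{i,j}=\sqrt{\varsigma+\sum_{q\le j}(v_{i,q}^{(\ell_q)})^2}$, the analogue of \eqref{eq:explicit_weights}. Summing \eqref{eq:decr_f_det_gs_c} over $j=0,\dots,k$ and using AS.2 gives
\[
\varsigma\sum_{j=0}^k\sum_{i=1}^N \frac{(v_{i,j}^{(\ell_j)})^2}{\eta_{i,j}}\le \Gamma_0+\frac{\Bar L}{2}\sum_{j=0}^k\sum_{i=1}^N\frac{(v_{i,j}^{(\ell_j)})^2}{\eta_{i,j}^2}.
\]
For UR, take total expectations and use the tower property.

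The right-hand side is handled exactly as in \eqref{use_lemma_det}. Lemma~\ref{gen:series} with $c_j=(v_{i,j}^{(\ell_j)})^2$ bounds it by $\Gamma_0+N\Bar L\log u$, where
\[
u=\sqrt{1+\varsigma^{-1}\sum_{j}\|v_j^{(\ell_j)}\|^2}.
\]
The left-hand side is bounded from below by Lemma~\ref{lem:first_sum}, whose proof uses only monotonicity of the weights, so it holds verbatim for $v,\eta$. This gives
\[
\varsigma^{3/2}u\le \varsigma^{3/2}+\Gamma_0+N\Bar L\log u.
\]
For UR, first apply Jensen's inequality to pass the expectation inside the log, then use $u=\E[\cdot]$.

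Next apply Lemma~\ref{log_bound} with $\gamma_1=\varsigma^{3/2}$, $\gamma_2=N\Bar L$ and $\gamma_3=\varsigma^{3/2}+\Gamma_0$. Multiplying the result by $\sqrt\varsigma$ gives
\[
\sqrt{\textstyle\sum_j\|v_j^{(\ell_j)}\|^2}\le \Theta_c,
\]
with $\Theta_c$ exactly as in \eqref{k6-def_b}. The Cauchy--Schwarz step \eqref{eq:square_to_norm} then yields the average block bound $\Theta_c/\sqrt{k+1}$, deterministically or in expectation.

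Converting to the full measure $\|v_j\|$ is done rule by rule.
\begin{itemize}
\item \textbf{GS.} The selected block maximizes $\|v_j^{(\ell)}\|$, so $\|v_j\|\le\sqrt d\,\|v_j^{(\ell_j)}\|$. Here $v_j$ denotes the full projected-gradient vector, with all blocks computed.
\item \textbf{UR.} Since $\E_j\|v_j^{(\ell_j)}\|=\frac1d\sum_\ell\|v_j^{(\ell)}\|\ge\frac1d\|v_j\|$, the argument follows \eqref{eq:final_ineq_gj}.
\item \textbf{C.} This case is the main obstacle. I would mimic \eqref{eq:cyclic_eq}: compare $v_{td}^{(\ell)}$ with $v_{td+\ell}^{(\ell)}$. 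The projection $P_{[a_i,u_i]}$ is nonexpansive, so componentwise
\[
|v_{i,td}-v_{i,td+\ell}|\le 2|x_{i,td}-x_{i,td+\ell}|+|g_{i,td}-g_{i,td+\ell}|.
\]
Squaring, using $(a+b+c)^2\le3(a^2+b^2+c^2)$-type inequalities and AS.4, gives
\[
\|v^{(\ell)}_{td}-v^{(\ell)}_{td+\ell}\|^2\le(c_1+c_2L^2)\|x_{td}-x_{td+\ell}\|^2.
\]
Here $x_{td+\ell}-x_{td}=\sum_{s<\ell}s_{td+s}$ with $\|s_{td+s}\|\le\|v^{(s)}_{td+s}\|/\sqrt\varsigma$, hence $\le\|v^{(s)}_{td+s}\|/\varsigma$ since $\varsigma<1$.
\end{itemize}

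For the cyclic case, the constants must be chosen to land on $(6+4L^2)/\varsigma^2$; I expect this bookkeeping, isolated in Lemma~\ref{lem:append_bound}, to be the delicate part. After that, summing over $\ell$ and $t$ exactly as in \eqref{eq:final_cyclic}--\eqref{eq:bound_cyclic} gives
\[
\sum_t\|v_{td}\|^2\le 2\Bigl(1+d\tfrac{6+4L^2}{\varsigma^2}\Bigr)\sum_j\|v_j^{(\ell_j)}\|^2.
\]
Combining with the bound $\Theta_c$ concludes the proof.
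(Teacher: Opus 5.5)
Your proposal is correct and follows the paper's omitted proof. Lemma~\ref{lemma:dec_bAdaGrad_b} combined with Lemmas~\ref{gen:series}, \ref{lem:first_sum} and \ref{log_bound}, with $\gamma_1=\varsigma^{3/2}$, $\gamma_3=\varsigma^{3/2}+\Gamma_0$, reproduces $\Theta_c$ exactly, and the rule-by-rule conversion is the one of Theorem~\ref{theorem:bAdag_GS} and Lemma~\ref{lem:append_bound}. In the cyclic case your componentwise estimate already gives the constant $6+3L^2\le 6+4L^2$, so the bookkeeping is not delicate. Do state explicitly that the steps $s_{td+s}$ have disjoint block supports, so $\|\sum_s s_{td+s}\|^2=\sum_s\|s_{td+s}\|^2$; otherwise an extra factor of $d$ appears.
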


\section{Numerical experiments}  
\label{sec:exp} 

In this section, we present numerical results on synthetic and real-world optimization problems, both in convex and nonconvex settings. Specifically, we analyze \name\ applied to convex least-squares and to nonconvex logistic regression with log-sum penalization and its box constrained version applied to robust Nonnegative Matrix Factorization (NMF) with the Huber function (Huber-NMF). 

In what follows, we also consider noisy objective functions and gradients. The random noise is added following the idea proposed in~\cite{gratton2024nonconvex}, that is, 
\begin{equation}
    f(x)=f(x)(1+\delta \mathcal{N}(0,1)), \quad \text{and} \quad [g(x)]_i=[g(x)]_i(1+\delta \mathcal{N}(0,1)),
    \label{eq:noise}
\end{equation}
where $\delta \in [0,1]$ controls the noise level and $\mathcal{N}(0,1)$ is the normal distribution. 
We aim at:
\begin{enumerate}
    \item comparing the performances of the three block selection strategies of \name\ across different applications;
    \item comparing the \name\ algorithm with BCG using line-search to choose the step size (BCG-LS) showing the advantages of using an OFFO algorithm, such as \name, in situations where the objective function and the gradient are affected by noise;
    \item showing the effectiveness of \name\ with bound constraint (Algorithm~\ref{alg:bAdaGrad_b}) in solving Huber-NMF. 
\end{enumerate}

In the least squares and logistic regression experiments, we compare our method with BCG-LS, where at each iteration we choose a step size satisfying the following Armijo condition:
\begin{equation}
    f(x_k+\alpha_k g_k^{(\ell_k)}) \leq f(x_k) - c_1 \alpha_k \lVert g_k^{(\ell_k)} \rVert^2,
    \label{eq:Armijo}
\end{equation}
where $c_1$ is set to $10^{-4}$.  The condition is checked using a standard backtracking strategy with a maximum of 20 inner iterations and a reduction of the step by a factor of 2 any time the Armijo condition is not satisfied. The first step at every iteration in the backtracking procedure is tuned for each class of problems. When dealing with noisy functions and gradients, there is no guarantee that the backtracking procedure in the BCG-LS algorithm produces an admissible step size; however, we do not report it as a failure but we take the last value given by the procedure and continue. 

\textbf{Setup} We set $\varsigma=0.0001$ in the \name\ implementations. We stop each algorithm either when a fixed time limit or maximum number of iterations is reached, or when the norm of the gradient is below a fixed threshold depending on the experiment; the stopping criteria are specified in each experiment. Since we do not want to evaluate the full gradient at each iteration, we check the convergence criteria on the full gradient norm every 20 iterations . All the results displayed are averaged over 10 runs. The algorithms are implemented in MATLAB  R2021b on a  64-bit Samsung/Galaxy with 11th Gen Intel(R) Core(TM) i5-1135G7 @ 2.40GHz  and 8 GB of RAM, under Windows 11 version 23H2.

\subsection{Least squares} Let $A \in \mathbb{R}^{m\times n}$ and $b \in \mathbb{R}^m$, we solve least squares problems of the form
\begin{equation*}
    f(x)=\frac{1}{2}\lVert Ax-b \rVert_2^2.
\end{equation*}
For this experiment, we subdivide the variable $x \in \mathbb{R}^n$ into 10 blocks of dimension $\lfloor n/10 \rfloor$ or $\lceil n/10 \rceil$. In our experiments, we consider 14 different matrices $A$ divided into the following classes:
\begin{enumerate}
    \item Random matrices: randomly generated matrices from a standard normal distribution of dimensions $m \times n$ (\texttt{X = randn(m,n)} in MATLAB) with $(m,n) \in \{(1000,500),  (500,500), \allowbreak (500,1000)\}$.
    \item Random sparse matrices: randomly generated matrices from a standard normal distribution of dimensions with $(m,n) \in \{(1000,500),(500,500),(500,1000)\}$ where 50$\%$ of the entries are randomly replaced with zeros.
    \item Bernoulli matrices: random matrix of dimension $500 \times 1000$ where each entry is either 0 or 1 with the same probability.
    \item Discrete cosine transform matrix: we generate the matrix with \texttt{dctmtx(n)} in MATLAB and we truncate it to the first 500 rows corresponding to lowest frequencies, obtaining a wide matrix of size $500 \times 1000$.
    \item Sparco matrices: 6 matrices from the collection of sparse signal recovery problems in~\cite{van2007sparco} and supplied by S2MPJ~\cite{GratToin24}. Given a sparse vector $x^*$, the observation is generated as $b=Ax^*+r$, where $r $ is additive noise vector of appropriate dimension and $A$ is a fixed dictionary consisting of various bases such as wavelet, discrete cosine, and Fourier.
\end{enumerate}

According to the definition in~\eqref{eq:noise}, we apply random noise to our least square problems, considering four noise levels $(\delta=0, 0.2,0.3,0.4)$. We generate 10 different instances for each problem by randomly changing the noise and the starting point for the algorithms. Thus, we run a total of 140 experiments. We randomly initialize each algorithm from a norm one vector $x_0/||x_0||_2$, where $x_0=\texttt{ randn(n,1)}$ in MATLAB.  We set a tolerance on the norm of the gradient at $10^{-3}$ and a maximum time limit of 20 seconds. If the tolerance is not reached within the allotted time, we report the run as a failure. We consider performance profiles for assessing the quality of the solution, according to the definition in~\cite{dolan2002benchmarking}. We choose the weighted sum of gradient and function evaluations, denoted as $N_f$ and $N_g$ respectively, as performance measure, with the relation $N_g=(1+\frac{d_{\max}}{N}) N_f$, where $d_{\max}$ is the dimension of the largest block. 
This relation is due to the higher computational cost of computing the block gradient with respect to evaluating the objective function. Therefore, one iteration of \name\ with C or UR rule which requires one block gradient computation, has a cost per iteration of $(1+\frac{d_{\max}}{N}) N_f$; while the cost per iteration is $2N_f$ if the GS rule is used, since it requires a full gradient evaluation per iteration. For BCG-LS, the cost is $(1+\frac{d_{\max}}{N}) N_f+\mu N_f$, where $\mu$ is the number of inner cycle in the backtracking strategy. We then compute 
$$c_{p,s}=\text{cost required by solver $s$ to solve problem $p$},$$
and evaluate
\begin{equation}
    \rho(\tau)=\frac{1}{n_p} \left\lvert \left\{p \  : \  \frac{c_{p,s}}{\min_{s}c_{p,s}} \leq \tau \right\} \right\rvert,
    \label{eq:perf_ratio}
\end{equation}
where $n_p$ is the total number of problems considered.
Our performance profile shows the percentage $\rho(\tau)$ computed over all problems considered of a given solver to have a performance ratio (solver performance measure / best performance measure) within a factor $\tau$ of the best possible solver. The results are displayed in Figure~\ref{fig:perf_prof}. The initial step size in the backtracking procedure of the BCG-LS algorithm is set to 1.

\begin{figure}[h]
\centering
\begin{minipage}[h]{0.48\linewidth}
   \resizebox{1.05\textwidth}{!}{\input{Tikz/perf_ls_20s_0n}}
\end{minipage}
\begin{minipage}[h]{0.48\linewidth}
    \resizebox{1.05\textwidth}{!}{\input{Tikz/perf_ls_20s_20n}}
\end{minipage}

\begin{minipage}[h]{0.48\linewidth}
    \resizebox{1.05\textwidth}{!}{\input{Tikz/perf_ls_20s_30n}}
\end{minipage}
\begin{minipage}[h]{0.48\linewidth}
    \resizebox{1.05\textwidth}{!}{\input{Tikz/perf_ls_20s_40n}}
\end{minipage}
    \caption{Performance profile according the definition given in~\cite{dolan2002benchmarking}. The graphs show the percentage $\rho(\tau)$ in~\eqref{eq:perf_ratio} having a performance ratio within a factor $\tau$ from the best solver. Results on 140 instances of least squares problems. }
    \label{fig:perf_prof}
\end{figure}

The performance profiles show that, in the absence or for small values of noise, BCG-LS is the best algorithm for any block selection rule considered, being more than 90 times less expensive than \name\ in the noiseless case and more than 30 times less expensive with $20\%$ of noise. We observe an opposite behavior when the level of noise increases, in fact, the percentage of successfully solved problem of BCG-LS with random or cyclic block selection rule, decreases from $80\%$ to $40\%$ when the noise level is above the $30\%$. BCG-LS with GS rule still maintains a success rate over $70\%$ when the noise level is $30\%$, while the accuracy drops to $40\%$ when the noise is increased. On the other hand, \name\ is more robust, maintaining a constant success percentage in the allotted time of approximately $80\%$ for all noise levels. Among the different variants of \name, the cyclic rule outperforms the GS criteria for all noise levels considered.

\subsection{Logistic regression with log-sum penalty (LSP)}
\label{subsec:logistic}
We address a binary logistic regression problem with a smooth log-sum penalty (LSP) of the form
\begin{equation}
f(x)=\frac{1}{N_S} \sum_{j=1}^{N_S} \log(1+e^{-z_j y_j^Tx} )+\lambda \sum_{i=1}^N\log(1+\alpha x_i^2),
\label{eq:logistic}
\end{equation}
where  $\{y_j,z_j\}$ is a labeled dataset, and at each sample $  y_j \in \mathbb{R}^N$ corresponds a binary label $ z_j \in \{0,1\}$ for $j=1,\dots,N_S$ that classifies $y_j$ into two disjoint classes. The objective function in~\eqref{eq:logistic} is smooth and nonconvex due to the LSP term. The log-sum function was first proposed by Candes et al.~\cite{candes2008enhancing} as a relaxation of the sparsity-inducing $\ell_0$-norm and it can be used as an alternative regularizer to the $\ell_1$-norm in logistic regression that is less biased towards large coefficients~\cite{yuan2023feature}.   

In our experiments, we consider the following labeled data sets: MUSH~\cite{mushroom_73}, MNIST\footnote{Classification between even and odd numbers.}~\cite{LeCun2005TheMD}, GISETTE~\cite{machrep}, 
REGEDO~\cite{chang2011libsvm}, 
A9A~\cite{machrep}, 
and MOLECULE~\cite{machrep}. For the MNIST and A9A, datasets, a random subset of 1,000 samples is selected for the experiments while all remaining datasets are used in their entirety. The data samples are randomly divided into training and testing set with a ratio of 70:30. We subdivide the variable $x \in \mathbb{R}^n$ into 10 blocks of dimension $\lfloor n/10 \rfloor$ or $\lceil n/10 \rceil$. We minimize the logistic loss in~\eqref{eq:logistic} on the training set, fixing the parameters to $\lambda=0.1$ and $\alpha=10$.  We randomly initialize each algorithm from a norm one vector $x_0/||x_0||_2$, where $x_0=\texttt{ randn(n,1)}$ in MATLAB. We run each algorithm for 20 seconds and we stop it if a tolerance of $10^{-9}$ on the real gradient norm is reached. The initial step size in the backtracking procedure of the BCG-LS algorithm is chosen as $\lVert x_k \rVert/\lVert g_k^{(\ell_k)} \rVert$. We report in Table~\ref{tab:fun_comp} the final objective function value averaged over 10 random initializations, for different values of the noise level $\delta$. For completeness, we include in the supplementary materials the tables containing the final gradient norm (Table~\ref{tab:acc_comp}) and the percentage of correct classification on the testing set (Table~\ref{tab:res_comp}).

\begin{table}[h!]
\centering
\begin{tabular}{ c || c | c | c | c | c | c | c || c } 
 \toprule 
    &  \multirow{2}{*}{}&  \multicolumn{3}{c|}{\name} &  \multicolumn{3}{c||}{BCG-LS}\\
\hline
\textbf{Data set} & $\delta$ &  \hspace{-0.1cm}GS\hspace{-0.3cm}&\hspace{-0.1cm} UR\hspace{-0.3cm}&\hspace{-0.1cm}Cyclic\hspace{-0.3cm}&\hspace{-0.1cm}GS\hspace{-0.3cm}& \hspace{-0.1cm}UR\hspace{-0.3cm}&\hspace{-0.1cm}Cyclic\hspace{-0.3cm} \\
  \midrule
    \hspace{-0.10cm} Mush  \multirow{5}{*}{}& 0 & 0.1655   &  0.1655  & 0.1656 &   0.1653 &  0.1659  &  \textbf{0.1652}    \\
 &   0.10 & \textbf{0.1655} & \textbf{0.1655}  & 0.1656 &  0.1753  &	 1.8548 & 0.5943    \\
  &  0.20  & \textbf{0.1656}  &  \textbf{0.1656} &  0.1657 &  0.1874 &	13.49 &  13.97  \\
  &  0.30 & \textbf{0.1656}  &  \textbf{0.1656} & 0.1658 &  1.3503 &	Inf &  Inf  \\
   \hline
\hspace{-0.10cm} Gisette  \multirow{5}{*}{}& 0 & 0.0952   &  0.2555 &    0.1068&   0.0929 &  0.0945   & \textbf{0.0924}    \\
 &   0.10 & 0.1578  &  0.2641  & 0.1085  & \textbf{0.0989} &	 0.1510 &  0.1312  \\
  &  0.20  & \textbf{0.0971}  & 0.1658 & 0.1056 & 0.1228 &	  0.3250 &  0.2861  \\
  &  0.30 & 0.1388 & 0.6569  & 0.1335 & \textbf{0.1265}   & 31.01 &   15.77 \\
   \hline
\hspace{-0.10cm} MNIST  \multirow{5}{*}{}& 0 & \textbf{0.2952}  & \textbf{0.2952}  &   \textbf{0.2952}  &  \textbf{0.2952}&   \textbf{0.2952}  &   0.2954   \\
 &   0.10 & \textbf{0.2952}  & \textbf{0.2952}  &   \textbf{0.2952}  & 0.3143 &	0.6423 & 0.5726   \\
  &  0.20 & \textbf{0.2952}  & \textbf{0.2952}  &   \textbf{0.2952}  &  0.3221 & Inf & Inf   \\
  &  0.30 & \textbf{0.2952}  & \textbf{0.2952}  &   \textbf{0.2952}  &  0.4235 & Inf & Inf   \\
   \hline
   \hspace{-0.10cm} Regedo  \multirow{5}{*}{}& 0 & 0.1758   & \textbf{0.1752}   &   0.2431  &   0.1797 &   0.1810 &   0.1834  \\
 &   0.10 & \textbf{0.1748 }  & 0.1764   &   0.2360  &   0.1965 &   0.3001  &   0.2897  \\
  &  0.20  & 0.1782   & \textbf{0.1745}   &   0.2478  &   0.2575 &   8.872  &   1.378  \\
  &  0.30 & 0.1790   & \textbf{0.1754}   &   0.2137  &   0.2411 &   Inf  &   73.45  \\
   \hline
   \hspace{-0.10cm} A9A  \multirow{5}{*}{}& 0 &  \textbf{0.3930} & \textbf{0.3930}   &  \textbf{0.3930} &   \textbf{0.3930}  & \textbf{0.3930} &  \textbf{0.3930} \\
 &   0.10 &  \textbf{0.3930} & \textbf{0.3930} &  \textbf{0.3930}  &  0.4143  &   20.41  &   20.27  \\
  &  0.20  &  \textbf{0.3930} & \textbf{0.3930} &  \textbf{0.3930}  &  0.4367  &   Inf  &   Inf  \\
  &  0.30&  \textbf{0.3930} & \textbf{0.3930} &  \textbf{0.3930}  &  0.5822  &   Inf &   Inf \\
   \hline
 \bottomrule 
\end{tabular}
\caption{Final function value averaged over 10 random initializations for the logistic regression with LSP, for different levels of noise $\delta$ according to the definition in~\eqref{eq:noise}. Lowest function values per row are highlighted in bold and we denote as Inf if the simulation produced a non-finite function value at least once over 10 random initializations. } 
\label{tab:fun_comp}
\end{table}

Table~\ref{tab:fun_comp} highlights the robustness of \name\ with respect to BCG-LS. In fact, we observe that in the noiseless scenario, BCG-LS provides the lowest value in the objective function for 4 out of 5 data seta but it suffers a significant drop in accuracy even for small values of noise. On the contrary, \name\ is less affected by the noise as the unchanged value of the function for MUSH, MNIST, and A9A testifies. Moreover, the GS rule achieves better accuracy on average both for \name\ and BCG-LS.

\subsection{Nonnegative Matrix Factorization (NMF) with the Huber loss}

We consider a robust NMF problem using the Huber function as error measure. NMF aims at approximating a given nonnegative matrix $X\in \mathbb{R}_+^{m\times n}$ by the product of two nonnegative factors, $W \in \mathbb{R}^{m \times r}$ and $H \in \mathbb{R}^{r \times n}$, where $r \ll \min(m,n)$ is the rank of the decomposition. NMF model finds application in several fields, such as hyperspectral unmixing~\cite{bioucas2012hyperspectral,ma2013signal}, topic modeling and document classifications~\cite{shahnaz2006document,ding2008equivalence}, and feature extraction~\cite{lee1999learning,guillamet2002non}, see~\cite{gillis2020nonnegative} for a book on the topic.

When data are affected by sparse noise or outliers, the common NMF models using the Frobenius norm or the KL divergence as an error measure might suffer from poor performance. For this reason, robust NMF models arise as a possible alternative by considering robust error measures such as $\ell_1$-norm~\cite{ke2005robust,guan2012mahnmf,seraghiti2026nonnegative}, $\ell_{2,1}$-norm~\cite{kong2011robust}, or the Huber function~\cite{du2012robust,guo2021modified,barkhoda2026instance}. In this work, we focus on a regularized version of Huber NMF, that can be formulated as follows:
\begin{equation}
    \min_{W \geq0,H\geq0} \Psi_\rho \left(  X - WH \right) + \lVert W\rVert_F^2 + \lVert H\rVert_F^2 \quad \text{with} \quad \Psi_\rho(a) =
\begin{cases}
\frac{1}{2}a^2 & \text{if } |a| \le \rho \\
\rho \left(|a| - \frac{1}{2}\rho\right) & \text{if } |a| > \rho
\end{cases},
    \label{eq:huber}
\end{equation}
where the Huber function is applied componentwise. The objective function in~\eqref{eq:huber} is smooth, nonconvex, and its gradient is block-Lipschitz. However, due to the product $WH$, the function does not have a globally Lipschitz gradient. BCD algorithms are state-of-the-art techniques for solving matrix factorization problems, as they decompose the optimization into subproblems corresponding to each factor. In this setting, due to the presence of the Huber loss, exact BCD approaches do not admit closed-form updates for $W$ and $H$. Moreover, deriving explicit block-wise Lipschitz constants is not straightforward, which makes the choice of step size in BCG nontrivial. Even though the lack of the global Lipschitz property of the gradient prevents cyclic \name\ from having convergence guarantee,  cyclic block selection rule is the most common approach for BCG in the NMF context. Therefore, we choose the cyclic rule for the experiments in this section.

Despite NMF being a nonconvex problem, it is well-known that extrapolation helps accelerating the convergence of BCG in the context of matrix factorization~\cite{hien2025block,wen2012solving,ang2019accelerating}. Thus, we implemented an extrapolated version of \name, dubbed \ename, which used two extrapolation steps after the update of each factor, that are
\begin{equation*}
    W^{k+1}=\max(0,W^{k+1}+\beta (W^{k+1}-W^k)), \quad H^{k+1}=\max(0,H^{k+1}+\beta (H^{k+1}-H^k)),
\end{equation*}
with $\beta$ fixed to 0.9. We compare our algorithms \name\ and \ename\ with the Multiplicative Update (MU) algorithm proposed in~\cite{du2012robust}, which is a specialized algorithm for NMF.

For this experiment, we consider the CBCL dataset, containing 2429 vectorized greyscale facial images of dimension  $19 \times 19$, resulting in a data set $X_t \in \mathbb{R}^{2429 \times 361}$. We add sparse noise to the images by randomly setting $7 \%$ of the pixels in white, obtaining a noisy version of the data $\Hat{X}$. The rank of the factorization is $r=49$. The $\rho$ parameter in the Huber function is set to the mean value of the noisy dataset $\Hat{X}$ and we fix both $\lambda_W$ and $\lambda_H$ to $10^{-4}$. All the algorithms are randomly initialized: the entries of $W$ and
$H$ are sampled from a uniform distribution in $[0,1]$. We then scale them so that the initial point matches the magnitude of the original matrix, that is,
we multiply $W$ by $\sqrt{\lVert \Hat{X} \rVert_F}/\lVert W \rVert_F$ and $H$ by $\sqrt{\lVert \Hat{X} \rVert_F}/\lVert W \rVert_F$. For this experiments, we run each algorithm for 500 iterations and no other stopping criteria is considered. We show the decrease of the objective function in~\eqref{eq:huber}, normalized dividing by the norm of $X$ in Figure~\ref{fig:cbcl_comp}. Moreover, we display in Table~\ref{tab:CBCL_t} the final relative error in Frobenius norm with the original data ($\lVert X_t - WH\rVert_F/\lVert X_t \rVert_F$) and the CPU time for each algorithm.

\begin{figure}
    \centering
    \scalebox{0.9}{\input{Tikz/CBCL_comp_007_500it_modified.tex}}
    \caption{Huber-NMF objective function values along the iterations. All the algorithms run for 500 iterations and results are averaged over 10 random initializations. }
    \label{fig:cbcl_comp}
\end{figure}

We observe from Figure~\ref{fig:cbcl_comp} that all algorithms have comparable results with the \ename\ having a faster decrease in the first stages and MU being slightly faster than \name. The final accuracy and running time in Table~\ref{tab:CBCL_t} confirms the similar behavior of \ename\ and MU that reach a better accuracy than \name.

\begin{table}[h]
\centering
\begin{tabular}{cccc}
\toprule
 & \name & \ename & MU \\
\midrule
Relative error  & 0.1684  & 0.1681  & \textbf{0.1675}  \\
CPU time  &  13.39 & 13.54  &  \textbf{12.72} \\

\bottomrule
\end{tabular}
\caption{Final relative error in Frobenius norm ($\lVert X_t - WH\rVert_F/\lVert X_t \rVert_F$) and CPU time for each algorithm to compute Huber-NMF on the CBCL data. Results are averaged over 10 random initializations. Lowest error and running time are highlighted in bold.}
\label{tab:CBCL_t}
\end{table}
We show an example of the images reconstructed by the \ename\ algorithm in Figure \ref{fig:reconstruction}. 

\begin{figure}[h]
    \centering
    \begin{minipage}[h]{0.32\linewidth}
        \includegraphics[width=\linewidth]{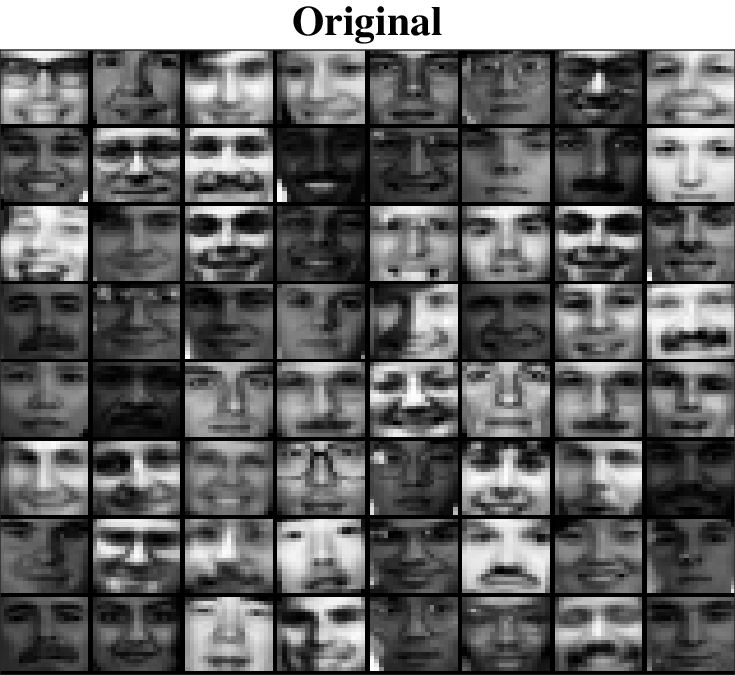}
    \end{minipage}
     \begin{minipage}[h]{0.32\linewidth}
        \includegraphics[width=\linewidth]{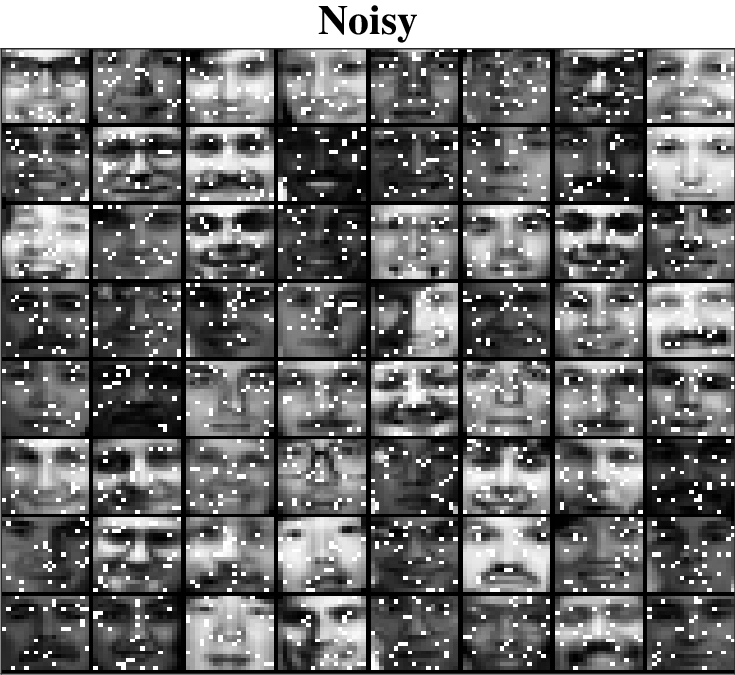}
    \end{minipage}
     \begin{minipage}[h]{0.32\linewidth}
        \includegraphics[width=\linewidth]{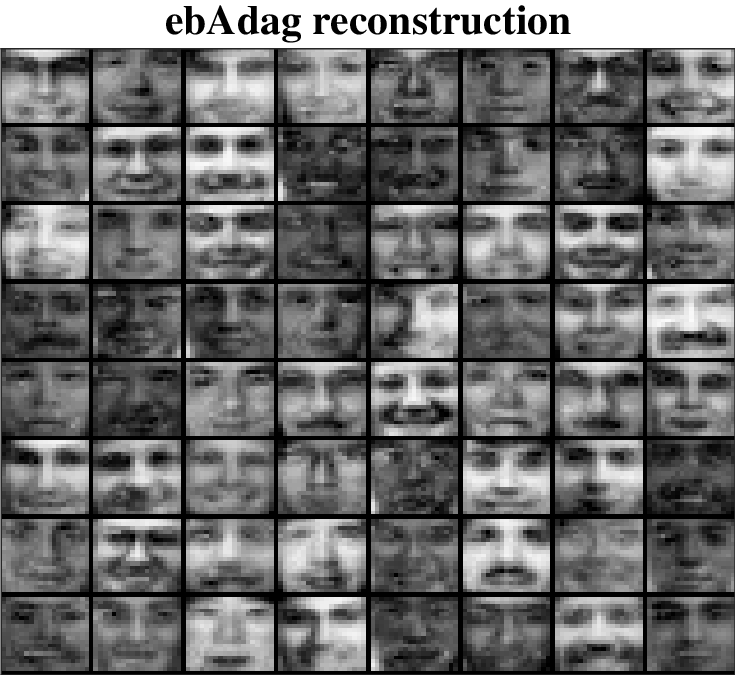}
    \end{minipage}
    \caption{Visual reconstruction of 64 randomly selected images from the CBCL data. Original images on the left, noisy images in the center with $7\%$ of randomly added white pixels, rank-49 Huber-NMF reconstruction after 500 iterations of the \ename\ algorithm.}
      \label{fig:reconstruction}
\end{figure}

\section{Conclusion}
In this paper, we proposed the \name\ algorithm: a novel adaptive block coordinate gradient method which extends the AdaGrad algorithm to the block coordinate context. We proved sublinear convergence rates for the three main block selection rules, namely Gauss-Southwell, uniform random selection, and cyclyc rule. To our knowledge this is the first adaptive algorithm covering the three block selection rule in the smooth, nonconvex setting. We also extended our framework and its convergence properties to box-constrained problems. We validated our algorithms on least squares and logistic regression, and on robust NMF.

\paragraph*{Acknowledgment} I thank Margherita Porcelli and all the NODA group (University of Florence) and Nicolas Gillis (University of Mons) for useful discussion and feedback on this work.


{\footnotesize
\bibliographystyle{plain}
\bibliography{bib.bib}
}

\appendix
\section{Appendix}
\subsection{Supplementary material to Section~\ref{sec:constrained}}
\begin{lemma}
\end{lemma}

    Suppose AS.1-AS.4 hold and that Algorithm~\ref{alg:bAdaGrad_b} is applied to problem~\eqref{eq:const_prob}. Then it holds 
    \begin{equation*}
        \average_{j \in \{0,\dots,k\}} \lVert v_j \rVert \leq \sqrt{2\left( 1+ d \frac{6+ 4L^2}{\varsigma^2} \right) \average_{j \in \{0,\dots,k\}}\lVert v_j^{(\ell_j)} \rVert^2}.
    \end{equation*}
    \label{lem:append_bound}

\proof{

Let us use the same reparametrization introduced in case 2 of the proof of Theorem~\ref{theorem:bAdag_GS}. Thus, we rewrite the iteration count as $k=Td+\ell$, where $\ell=0,\dots,d-1$. By the structure of Algorithm~\ref{alg:bAdaGrad_b},
\begin{equation}
    x_{td+\ell} =x_{td}- \sum_{s=1}^{\ell-1} \frac{v^{(s)}_{td+s}}{\max(\eta_{td+s},1)}.
    \label{eq:cum_xtd_app}
\end{equation}
Using~\eqref{eq:cum_xtd_app} and the Lipschitz continuity of $f$ in AS.4, and $\varsigma \leq \max(\eta_{td+s},1)$ for every $t$ and $\ell$, 
\begin{equation}
\begin{aligned}
     \lVert v_{td}^{(\ell)} \rVert^2 & \leq  \left( \lVert v_{td}^{(\ell)} -  v_{td+\ell}^{(\ell)}\rVert + \lVert v_{td+\ell}^{(\ell)} \rVert \right)^2 \leq 2  \lVert v_{td}^{(\ell)} -  v_{td+\ell}^{(\ell)} \rVert^2 + 2\lVert v_{td+\ell}^{(\ell)} \rVert^2\\
     &\leq  2  \lVert v_{td} -  v_{td+\ell}\rVert^2 + 2\lVert v_{td+\ell}^{(\ell)} \rVert^2 \\
     &= 2  \lVert -x_{td}+x_{td+\ell} + P_{C}(x_{td} -g_{td}) -  P_{C}(x_{td+\ell} -g_{td+\ell})\rVert^2 + 2\lVert v_{td+\ell}^{(\ell)} \rVert^2+2\lVert v_{td+\ell}^{(\ell)} \rVert^2\\
     & \leq 4 \lVert -x_{td}+x_{td+\ell} \rVert^2 + 4 \lVert x_{td+\ell}- x_{td}   +g_{td}-g_{td+\ell}  \rVert^2+2\lVert v_{td+\ell}^{(\ell)} \rVert^2\\
     &\leq 12\lVert -x_{td}+x_{td+\ell} \rVert^2+8L^2\lVert -x_{td}+x_{td+\ell} \rVert^2+2\lVert v_{td+\ell}^{(\ell)} \rVert^2\\
     &\leq (12+ 8L^2) \left\lVert \sum_{s=1}^{\ell-1} \frac{v^{(s)}_{td+s}}{\max(\eta_{i,j},1)} \right\rVert^2 + 2\lVert v_{td+\ell}^{(\ell)} \rVert^2\\
     &\leq (12+ 8L^2) \sum_{s=1}^{\ell-1} \left\lVert \frac{v^{(s)}_{td+s}}{\max(\eta_{i,j},1)} \right\rVert^2 + 2\lVert v_{td+\ell}^{(\ell)} \rVert^2\\
     & \leq \frac{(12+ 8L^2)}{\varsigma^2} \sum_{s=1}^{\ell-1} \lVert v^{(s)}_{td+s} \rVert^2 + 2\lVert v_{td+\ell}^{(\ell)} \rVert^2 
\end{aligned}
\label{eq:cyclic_eq_b}
\end{equation}

By applying the same reasoning presented in~\eqref{eq:final_cyclic}, and~\eqref{eq:bound_cyclic} we get 

\begin{equation}
   \frac{1}{T+1} \sum_{t=0}^T \lVert v_{td} \rVert \leq \frac{1}{\sqrt{T+1}} \sqrt{\sum_{t=0}^T \lVert v_{td} \rVert^2} \leq \frac{1}{\sqrt{T+1}} \sqrt{2  \left(  1 + d\frac{6+ 4L^2}{\varsigma^2} \right) \sum_{j=0}^k \lVert v_{j}^{(\ell_j)} \rVert^2}.
   \label{eq:bound_cyclic_b}
\end{equation}
}

\subsection{Supplementary material to Section~\ref{subsec:logistic}}
We report in Table~\ref{tab:res_comp} and Table~\ref{tab:acc_comp} the final gradient norm and the accuracy on the test set, respectively, on the experiment following the same settings described in Subsection~\ref{subsec:logistic}. 
\begin{table}[h!]
\centering
\begin{tabular}{ c || c | c | c | c | c | c | c || c } 
 \toprule 
    &  \multirow{2}{*}{}&  \multicolumn{3}{c|}{\name} &  \multicolumn{3}{c||}{BCG-LS}\\
\hline
\textbf{Data set} & $\delta$ &  \hspace{-0.1cm}GS\hspace{-0.3cm}&\hspace{-0.1cm} UR\hspace{-0.3cm}&\hspace{-0.1cm}Cyclic\hspace{-0.3cm}&\hspace{-0.1cm}GS\hspace{-0.3cm}& \hspace{-0.1cm}UR\hspace{-0.3cm}&\hspace{-0.1cm}Cyclic\hspace{-0.3cm} \\
  \midrule
    \hspace{-0.10cm} Mush  \multirow{5}{*}{}& 0 & $7.3 \cdot 10^{-4}$   &  $\mathbf{4.5 \cdot 10^{-4}}$  & $4.8 \cdot 10^{-4}$ &   0.0011 &   0.0012  &  0.0011    \\
 &   0.10 & $7.2 \cdot 10^{-4}$ &  $\mathbf{4.8 \cdot 10^{-4}}$  & $5.0 \cdot 10^{-4}$ &  0.0513  &	 0.0740 & 0.1297    \\
  &  0.20  & $7.3 \cdot 10^{-4}$ &   $\mathbf{4.9 \cdot 10^{-4}}$  &  $5.1 \cdot 10^{-4}$ &  0.1657 &	0.0125 &  0.0122  \\
  &  0.30 & $ 6.6 \cdot 10^{-4}$  &  $\mathbf{4.8 \cdot 10^{-4}}$  & $4.9 \cdot 10^{-4}$ &  0.1112 &	0.2223 &  0.4120  \\
   \hline
\hspace{-0.10cm} Gisette  \multirow{5}{*}{}& 0 & 0.0144   &  0.0508 &    0.0207&   0.0129 &  0.0151   & \textbf{0.0088}    \\
 &   0.10 & \textbf{0.0063}  &  0.0907  & 0.0279  & 0.0819 &	 0.2145 &  0.1613  \\
  &  0.20  & \textbf{0.0074}  & 0.0712 & 0.0359 & 0.2790 &	  0.2638 &  0.2408  \\
  &  0.30 & \textbf{0.0139} & 0.0762  & 0.0313 & 0.2344   &	  0.8555 &   0.9220 \\
   \hline
\hspace{-0.10cm} MNIST  \multirow{5}{*}{}& 0 & 0.0162   &$\mathbf{4.9 \cdot 10^{-5}}$   &   0.0035  &$ 1.4 \cdot 10^{-4} $&   $ 2.3 \cdot 10^{-4} $  &   0.0045   \\
 &   0.10 & $\mathbf{3.2 \cdot 10^{-5}}$ &  0.0056  & 0.0031 & 0.1285 &	0.2360 & 0.2985   \\
  &  0.20  & 0.0117 &  $\mathbf{5.6 \cdot 10^{-4}}$  & 0.0024 &  0.2282 &	0.2769 & 0.2498   \\
  &  0.30 &  $\mathbf{2.1 \cdot 10^{-6}}$ &  $ 8.2 \cdot 10^{-4} $  & 0.0028 &  0.5221 &	1.5072 & 1.8589   \\
   \hline
   \hspace{-0.10cm} Regedo  \multirow{5}{*}{}& 0 & 0.0639   & $ \mathbf{6.3 \cdot 10^{-4}}$   &   0.0590  &   0.0113 &   0.0084 &   0.0111  \\
 &   0.10 &0.0041   & \textbf{0.0015}   &   0.0473  &   0.3390 &   0.4347  &   0.6469  \\
  &  0.20  & 0.0089   & \textbf{0.0020}   &   0.0521  &   0.3038 &   1.0931  &   0.6725  \\
  &  0.30 & 0.0262   & \textbf{0.0011}   &   0.0468  &   1.0205 &   2.5427  &   2.3687  \\
   \hline
   \hspace{-0.10cm} A9A  \multirow{5}{*}{}& 0 &  $\mathbf{ 9.9 \cdot 10^{-10}}$ & $\mathbf{ 9.9 \cdot 10^{-10}}$   &   $\mathbf{ 9.9 \cdot 10^{-10}}$ &    $1.3 \cdot 10^{-6}$  &   $8.1 \cdot 10^{-7}$  &   $5.4 \cdot 10^{-7}$  \\
 &   0.10 &  $\mathbf{ 9.9 \cdot 10^{-10}}$ & $\mathbf{ 9.9 \cdot 10^{-10}}$ &  $\mathbf{ 9.9 \cdot 10^{-10}}$  &  0.0916  &   0.1656  &   0.1554  \\
  &  0.20  &  $\mathbf{ 9.9 \cdot 10^{-10}}$ & $\mathbf{ 9.9 \cdot 10^{-10}}$ &  $\mathbf{ 9.9 \cdot 10^{-10}}$  &  0.1322  &   0.1787  &   0.1602  \\
  &  0.30 &  $\mathbf{ 9.9 \cdot 10^{-10}}$ & $\mathbf{ 9.9 \cdot 10^{-10}}$ &  $\mathbf{ 9.9 \cdot 10^{-10}}$  & 0.2295  &   0.4788 &   0.6092  \\
   \hline
 \bottomrule 
\end{tabular}
\caption{Final gradient norm for logistic regression with LSP, for different levels of noise $\tau$ according to the definition in~\eqref{eq:noise}. Lowest gradient norm per row are highlighted in bold. } 
\label{tab:res_comp}
\end{table}

\begin{table}[h!]
\centering
\begin{tabular}{ c || c | c | c | c | c | c | c || c } 
 \toprule 
    &  \multirow{2}{*}{}&  \multicolumn{3}{c|}{\name} &  \multicolumn{3}{c||}{BCG-LS}\\
\hline
\textbf{Data set} & $\delta$ &  \hspace{-0.1cm}GS\hspace{-0.3cm}&\hspace{-0.1cm} UR\hspace{-0.3cm}&\hspace{-0.1cm}Cyclic\hspace{-0.3cm}&\hspace{-0.1cm}GS\hspace{-0.3cm}& \hspace{-0.1cm}UR\hspace{-0.3cm}&\hspace{-0.1cm}Cyclic\hspace{-0.3cm} \\
  \midrule
    \hspace{-0.10cm} Mush  \multirow{5}{*}{}& 0 & 96.96  & 97.02  & 97.13 &   96.90 &  \textbf{97.58}  &  96.87    \\
 &   0.10 & 96.96 &  97.03  & 97.01 &  \textbf{98.27}  &	 91.03 & 87.82    \\
  &  0.20  & \textbf{98.99} &   97.27  &  97.26 &  97.77 &	90.34 &  85.18  \\
  &  0.30 & 97.00  &  97.22 & \textbf{97.51} &  94.39 &	80.26 &  79.11  \\
   \hline
\hspace{-0.10cm} Gisette  \multirow{5}{*}{}& 0 & 94.93   &  93.63 &    94.47&  \textbf{ 95.10} &  94.93   & 94.97    \\
 &   0.10 & 92.73  &  93.07  & \textbf{94.76}  & 94.50 &	 93.93 &  94.93  \\
  &  0.20  & \textbf{94.80}  & 94.10 & 94.60 & 93.77 &	  92.16 &  92.43  \\
  &  0.30 & \textbf{94.37} & 91.80  & 94.30 & 94.10   & 85.90 &   87.47 \\
   \hline
\hspace{-0.10cm} MNIST  \multirow{5}{*}{}& 0 & 85.67   & 85.67   &   85.67  & 85.67&  85.67  &   \textbf{86.00}   \\
 &   0.10 & \textbf{85.67}   & \textbf{85.67}   &   \textbf{85.67}  & 84.27 &	80.03 & 80.83   \\
  &  0.20  & \textbf{85.67}   & \textbf{85.67}   &   \textbf{85.67}  & 84.83 &	76.13 & 77.33   \\
  &  0.30 & \textbf{85.67}   & \textbf{85.67}   &   \textbf{85.67}  &  80.43 &	62.57 & 70.70   \\
   \hline
   \hspace{-0.10cm} Regedo  \multirow{5}{*}{}& 0 & 95.13   & \textbf{95.40}   &  \textbf{95.40}  &   94.07 &   94.67 &   94.13  \\
 &   0.10 &95.47   & 95.13   &   \textbf{95.53}  &   94.00 &   94.67  &   93.80  \\
  &  0.20  & 95.40   & 95.33   &   \textbf{95.73}  &   93.27 &   84.13  &   91.13  \\
  &  0.30 & 95.07   & 95.07   &   \textbf{95.47}  &   93.80 &   72.93  &   88.13  \\
   \hline
   \hspace{-0.10cm} A9A  \multirow{5}{*}{}& 0 &  \textbf{84.67} & \textbf{84.67}   &   \textbf{84.67} &  \textbf{84.67} &  \textbf{84.67}  &  \textbf{84.67}  \\
 &   0.10 & \textbf{84.67} & \textbf{84.67} &  \textbf{84.67}  &  84.56  &   74.10  &   76.23  \\
  &  0.20  &  \textbf{84.67} & \textbf{84.67} &  \textbf{84.67}  &  80.63 &   75.90  &   77.53  \\
  &  0.30 & \textbf{84.67}& \textbf{84.67} &  \textbf{84.67}  & 78.50  &   61.07 &   63.70  \\
   \hline
 \bottomrule 
\end{tabular}
\caption{Percentage of correct classification on the testing set, for different levels of noise $\tau$ according to the definition in~\eqref{eq:noise}. Highest percentages per row are highlighted in bold. } 
\label{tab:acc_comp}
\end{table}    
\end{document}